\numberwithin{equation}{section}
\theoremstyle{plain}
\newtheorem{prop}{Proposition}[section]
\newtheorem{thm}[prop]{Theorem}
\newtheorem{cor}[prop]{Corollary}
\newtheorem{lem}[prop]{Lemma}
\theoremstyle{definition}
\newtheorem{example}[prop]{Example}
\newcommand{\scr}[1]{\mathscr #1}
\def\lra{\longrightarrow}
\def\bbn{{\mathbb N}}
\def\dim{{{\rm dim}\,}}
\def\bfdim{{{\rm\bf dim}\,}}
\def\End{{{\rm End}\,}} \def\thz{\theta}
\def\Hom{{{\rm Hom}\,}}
\def\Ext{{{\rm Ext}\,}}
\def\mod{{\text{\rm mod}}}
  \def\lra{\longrightarrow}
\def\cal{\mathcal}
\def\dim{{\rm dim}\,}
\begin{document}

\title[{On the slopes of semistable representations of tame quivers}]{On the slopes of semistable representations\\ of tame quivers}

\author[X. Wang]{Xintian Wang}

\address{School of Mathematical Sciences, Beijing Normal University,
Beijing 100875,  China.}
\email{wangxintian0916@126.com }

\thanks{Supported by the Natural Science Foundation of China.}


\keywords{stability condition, slope, tame quiver}

\subjclass[2010]{16G20, 16G70.}

\begin{abstract} Stability conditions play an important role in the study of
representations of a quiver. In the present paper, we study semistable representations
of quivers. In particular, we describe the slopes of semistable representations of a
tame quiver for a fixed stability condition.
\end{abstract}

\maketitle

\section{Introduction and preliminaries}

The notion of stability was firstly introduced by Mumford in his
work on the geometric invariant theory in 1960s and soon became
widely used as a technical tool while constructing moduli varieties.
In \cite{ADK} King set up the semistability and stability in the
language of the module category over a finite dimensional algebra,
more generally for an arbitrary abelian category.

Let $Q=(Q_0,Q_1)$ be a finite acyclic quiver (i.e., without oriented
cycles) with vertex set $I=Q_0$ and arrow set $Q_1$. Let $k$ be an
algebraically closed field and $\mod kQ$ denote the category of
finite dimensional modules over the path algebra $kQ$ (equivalently,
finite dimensional representations of $Q$ over $k$). Following Reineke
\cite{MR}, a stability in $\mod kQ$ is defined relative to a slope
function $\mu$ on $\bbn I\backslash\{0\}$. More precisely, a
$kQ$-module $X$ is called semistable (resp. stable) if $\mu(\bfdim
U)\leq \mu(\bfdim X)$ (resp. $\mu(\textbf{dim} U)< \mu(\textbf{dim}
X)$) for all proper submodules $0\neq U\subseteq X$, where $\bfdim X$
and $\bfdim U$ denote the dimension vectors of $X$ and $U$,
respectively. In this case, $\mu(\bfdim X)$ is called the slope of
$X$.

For each $a\in\mathbb Q$, let $\mod^{a}kQ$ denote the full subcategory of $\mod kQ$ consisting
of semistable $kQ$-modules of slope $a$. It is known that each
$\mod^{a}kQ$ is an abelian category of $\mod kQ$. In case $Q$ is a Dynkin or tame quiver, the subcategory
$\mod^{a}kQ$ has been characterized in \cite{IPT,IT}.

The main purpose of the present paper is to describe the slopes of
semistable modules of $kQ$ when $Q$ is a tame quiver. This is based
on an investigation of the structure of the subcategories $\mod^{a} kQ$.

\medskip

In the following we briefly review some basic facts about finite dimensional algebras
and their representations. We also introduce the stability condition for a finite dimensional
algebra. We refer to \cite{ASS,ARS,HP,DDPW} for more details and complete treatments.

Let $k$ be a field and $A$ be a finite dimensional
algebra over $k$. By $\mod A$ we denote the category of all finite
dimensional left $A$-modules. Let $I$ denote the set
of isoclasses of simple objects in $\mod A$, and fix a set
$\{S_i\mid i\in I\}$ of representatives of the isoclasses in $I$.
For any $M\in$ $\mod A$, let $[M]$ denote the isoclass of $M$ and $\bfdim M$
the dimension vector of $M$. More precisely, if $\bfdim M=(x_i)_{i\in I}$, then
$x_i$ is the number of composition factors isomorphic to $S_i$ in a composition
series of $M$. Further, set $f_i=\dim_{k}\End_{A}(S_i)$.

From now onwards, we always assume that $A$ is hereditary. The Euler form of $A$ is defined by
$$\langle\bfdim M,\bfdim N\rangle=\dim_{k}\Hom_{A}(M,N)-\dim_{k}\Ext_{A}^{1}(M,N),$$
 where $M,N\in\mod A$. Further, let $\Gamma_{A}$ be the Auslander--Reiten quiver of $A$
 with the Auslander--Reiten translation $\tau=\tau_A$. A
connected component $\mathcal{P}$ in $\Gamma_{A}$ is called
preprojective (resp. preinjective) if, for each vertex $[M]$ in
$\mathcal{P}$, the supremum (resp. infimum) of the lengths of the
paths ending (resp. starting) at $[M]$ is finite. Otherwise, it is
called regular.

 An indecomposable
$A$-module is called preprojective (resp. preinjective) if it
belongs to a preprojective (resp. preinjective) component of $\Gamma
_{A}$ and an arbitrary $A$-module is called preprojective (resp.
preinjective) if it is a direct sum of indecomposable
preprojective (resp. preinjective) modules. Otherwise, it is a regular module.

Suppose now that  $A$ is of tame type. Let $\delta$ be the minimal positive imaginary
root of $A$. Recall from \cite{DR} that the defect $\partial(M)$ of a module $M$ is defined to be the
integer $\langle \delta, \bfdim M \rangle$. Then an
indecomposable module M is preprojective (resp. regular,
preinjective) if and only if $\partial(M)<0$ (resp.
$\partial(M)=0$, $\partial(M)>0$ )

A translation quiver ($\mathcal{T},\tau$) is defined to be a stable
tube of rank $r\geq1$ if there is an isomorphism of translation
quivers $\mathcal{T} \cong \mathbb{Z}\mathbb{A}_{\infty}$$/(\tau
^r)$. A stable tube of rank r=1 is defined to be a homogeneous tube;
otherwise, it is a non-homogeneous tube. A representation in a
stable tube which has only one arrow to and from it is called
quasi-simple.

The following result is well known; see \cite{DR}.

\begin{lem} Let $A$ be a finite dimensional hereditary algebra of tame type. Then
the Auslander--Reiten quiver $\Gamma_{A}$ of $A$ contains a
preprojective component $\mathcal{P}$, a preinjective component
$\mathcal{I}$, and a $\mathbb{P}$$^1(k)$-family
$\{\mathcal{T}_\lambda\}$ of stable tubes of which only finitely
many ones are non-homogeneous. Moreover, for $\lambda, \lambda'\in
\mathbb{P}$$^1(k)$, we have
\begin{itemize}

\item[(1)] $\Hom({\mathcal I},{\mathcal P} \bigcup {\mathcal T}_\lambda)=0$ and
$\Hom({\mathcal T}_\lambda,{\mathcal P})=0$,

\item[(2)] $\Hom({\mathcal T}_{\lambda},{\mathcal T}_{\lambda'})=0$ if
$\lambda \neq \lambda'$.
\end{itemize}

\end{lem}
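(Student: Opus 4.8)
The plan is to read off the global shape of $\Gamma_A$ from the classification of indecomposables by defect recalled above, and then to deduce the orthogonality relations (1) and (2) from the $\tau$-invariance of the defect together with the Auslander--Reiten formula. First I would sort the indecomposables into the three classes: preprojective ($\partial<0$), regular ($\partial=0$) and preinjective ($\partial>0$). Since $A$ is hereditary, every indecomposable projective lies in one component $\mathcal{P}$, and the preprojective indecomposables are exactly the $\tau^{-n}P$ with $P$ projective and $n\geq 0$; dually the preinjective ones are the $\tau^{n}I$ with $I$ injective. The decisive point is that the defect is $\tau$-invariant: from $\bfdim\tau M=\Phi\,\bfdim M$ for $M$ non-projective, with $\Phi$ the Coxeter matrix, the identity $\langle\delta,\Phi x\rangle=\langle\delta,x\rangle$ — which follows from $\Phi\delta=\delta$ and from $\delta$ lying in the radical of the symmetrized Euler form — gives $\partial(\tau M)=\partial(M)$. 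Hence $\tau$ permutes the regular indecomposables, and since no regular component contains a projective or injective, $\tau$ and $\tau^{-1}$ are defined without end on each regular component; this periodicity forces every regular component to be a stable tube $\mathbb{Z}\mathbb{A}_\infty/(\tau^r)$.

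Next I would set up the $\mathbb{P}^1(k)$-parametrization of the tubes and the finiteness of the non-homogeneous ones. Here I would examine the regular simple modules and their $\tau$-orbits, the point being that the indecomposables of dimension vector $\delta$ form a one-parameter family indexed by $\mathbb{P}^1(k)$, one sitting at the mouth of each homogeneous tube, while the non-homogeneous tubes account for a finite numerical defect measured by $\sum_\lambda(r_\lambda-1)$, bounded in terms of the number of vertices using the positive semidefiniteness of the Euler form with radical $k\delta$. I expect this to be the main obstacle: it is essentially the full classification of tame hereditary algebras and cannot be reduced to a short formal manipulation, so in a self-contained treatment one would either carry out the Donovan--Freislich/Dlab--Ringel analysis or invoke it as in \cite{DR}.

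Finally, for the orthogonality relations I would argue formally from the structure just obtained. For (2) it suffices to treat regular simples, since each regular module has a filtration whose factors are the regular simples of its own tube; two regular simples in distinct tubes are mutually orthogonal (both $\Hom$ and $\Ext^1$ between them vanish), and induction on regular length then yields $\Hom(\mathcal{T}_\lambda,\mathcal{T}_{\lambda'})=0$ for $\lambda\neq\lambda'$. For (1) I would establish the ordering principle that there are no nonzero maps backwards, namely $\Hom(\mathcal{R}\cup\mathcal{I},\mathcal{P})=0$ and $\Hom(\mathcal{I},\mathcal{R})=0$, where $\mathcal{R}$ denotes the regular part. Each of these reduces, via the Auslander--Reiten formula $\Ext^1(X,Y)\cong D\Hom(Y,\tau X)$ and the $\tau$-invariance of the defect, to the observation that a nonzero map into a preprojective module can only come from a preprojective module, its predecessors in $\Gamma_A$ being again preprojective, while a nonzero map into a regular module cannot originate from a preinjective one. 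Specializing this ordering principle to $\mathcal{P}$, to $\mathcal{T}_\lambda\subseteq\mathcal{R}$ and to $\mathcal{I}$ yields exactly $\Hom(\mathcal{I},\mathcal{P}\cup\mathcal{T}_\lambda)=0$ and $\Hom(\mathcal{T}_\lambda,\mathcal{P})=0$, completing the proof.
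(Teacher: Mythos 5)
You should first be aware that the paper does not prove this lemma at all: it is stated as ``well known'' and quoted directly from \cite{DR}, so there is no internal argument to measure your proposal against. Your reconstruction follows the standard route to this structure theorem: the defect trichotomy, the $\tau$-invariance of the defect via the Coxeter matrix (your identity $\langle\delta,\Phi x\rangle=\langle\delta,x\rangle$ is correct, since $\langle y,\Phi x\rangle=-\langle x,y\rangle$ and $\delta$ is radical for the symmetrized form), and the reduction of the orthogonality relations to regular simples and to the fact that, over a hereditary algebra, a nonzero map into a projective (resp.\ out of an injective) would split off a projective (resp.\ injective) direct summand. Your treatment of (1) and (2) is correct in outline, and you are also right that the $\mathbb{P}^1(k)$-parametrization of the tubes and the finiteness of the non-homogeneous ones constitute the real content, which must either be taken from \cite{DR} or re-proved by the full Dlab--Ringel analysis; on that point your proposal is in exactly the same position as the paper itself.

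There is, however, one genuinely incorrect inference in your sketch. You claim that because no regular component contains a projective or an injective, so that $\tau$ and $\tau^{-1}$ are everywhere defined on it, ``this periodicity forces every regular component to be a stable tube.'' Stability is not periodicity, and the implication is false in general: for a \emph{wild} hereditary algebra the regular components are likewise stable, yet they have shape $\mathbb{Z}\mathbb{A}_\infty$ and are not tubes. Tameness must enter precisely at this step. The standard repair is to use that, for an affine root system, some power $\Phi^h$ of the Coxeter matrix fixes every defect-zero vector (one has $\Phi^h x = x + c\,\partial(x)\delta$ for a suitable constant $c$), so the dimension vectors along a $\tau$-orbit in a regular component are periodic and hence bounded; one then invokes the Happel--Preiser--Ringel theorem (a stable translation quiver containing a $\tau$-periodic vertex, or with bounded dimension vectors, is a tube) to conclude that each regular component is of the form $\mathbb{Z}\mathbb{A}_\infty/(\tau^r)$. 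Without this step your argument establishes tube structure only by assertion; with it, your sketch is a faithful reconstruction of the proof that \cite{DR} supplies.
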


Take $\theta=(\theta_i)_{i\in I}\in{\mathbb Z} I$ and define a linear form on
$\mathbb{Z}$$I$ by setting $\theta (d)= \sum_{i\in I}\theta_id_if_i$
(for simplicity, we still denote by $\theta$ the linear form), where $d=(d_i)_{i\in I}\in{\mathbb Z} I$.
We call $\theta$ a weight for $A$. The slope function $\mu$ on
$\mathbb{N}$$I\backslash \{0\}$ associated to $\thz$ is defined by
$$\mu(d)=\frac{\sum\limits_{i\in I}\theta_id_if_i }{ \sum\limits_{i\in
I}d_if_i}.$$
For each $ M\in\mod A$, write $\mu(M$) for $\mu(\bfdim M)$.
A module $M\in\mod A$ is called {\it semistable} (resp. {\it
stable}) if $\mu(U)\leq \mu(M)$ (resp. $\mu(U)< \mu(M)$) for
all proper submodules $0\neq U\subset M$.

\medskip

The following two lemmas are well known, see, for example,
\cite{MR}.

\begin{lem}\label{ses} Given a short exact sequence
$$0\lra M\lra X\lra N\lra 0.$$ in  $\mod A$, we have
$$\mu(M)\leq \mu(X) \Longleftrightarrow\mu(X)\leq \mu(N)\Longleftrightarrow\mu(M)\leq
\mu(N)\;\text{ and}$$
$${\rm min}(\mu(M),\mu(N))\leq \mu(X)\leq{\rm max}(\mu(M),\mu(N)).$$
If $\mu(M)=\mu(X)=\mu(N)$, then
$X$ is semistable if and only if $M$ and $N$ are semistable.
\end{lem}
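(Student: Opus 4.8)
The plan is to prove Lemma~\ref{ses} purely from the definition of the slope function, exploiting the fact that $\mu(d)$ is a ratio of two linear forms in $d$. First I would introduce the abbreviations $t(d)=\sum_{i\in I}d_if_i$ for the (positive) denominator and $\theta(d)=\sum_{i\in I}\theta_id_if_i$ for the numerator, so that $\mu(d)=\theta(d)/t(d)$. The decisive structural fact is that both $\theta$ and $t$ are \emph{additive} on short exact sequences: from $0\to M\to X\to N\to 0$ we read off $\bfdim X=\bfdim M+\bfdim N$, hence $t(X)=t(M)+t(N)$ and $\theta(X)=\theta(M)+\theta(N)$, with $t(M),t(N)>0$ whenever $M,N\neq 0$. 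This reduces the whole lemma to an elementary ``mediant'' inequality for fractions with positive denominators.

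With this setup, the first chain of equivalences follows from a standard mediant computation. Writing $\mu(M)=\theta(M)/t(M)$ and $\mu(N)=\theta(N)/t(N)$, I would clear denominators in each proposed inequality. For instance $\mu(M)\le\mu(X)$ becomes $\theta(M)t(X)\le\theta(X)t(M)$; substituting $t(X)=t(M)+t(N)$ and $\theta(X)=\theta(M)+\theta(N)$ and cancelling the common term $\theta(M)t(M)$, this collapses to $\theta(M)t(N)\le\theta(N)t(M)$, which is exactly $\mu(M)\le\mu(N)$ after multiplying back by the positive quantity $t(M)t(N)$. Running the identical cancellation on $\mu(X)\le\mu(N)$ yields the same condition $\theta(M)t(N)\le\theta(N)t(M)$, so all three inequalities are simultaneously equivalent. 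The sandwich estimate $\min(\mu(M),\mu(N))\le\mu(X)\le\max(\mu(M),\mu(N))$ is then immediate, since the mediant $(\theta(M)+\theta(N))/(t(M)+t(N))$ of two fractions with positive denominators always lies between the two fractions; concretely, $\mu(X)-\mu(M)$ and $\mu(N)-\mu(X)$ have the same sign, both governed by the sign of $\theta(N)t(M)-\theta(M)t(N)$.

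For the final assertion, suppose $\mu(M)=\mu(X)=\mu(N)=:a$. One direction is routine: if $X$ is semistable and $0\neq U\subseteq M\subseteq X$ is a proper submodule of $M$, then $U$ is also a proper submodule of $X$, so $\mu(U)\le\mu(X)=a=\mu(M)$, giving semistability of $M$; and for $N$, I would take a submodule $0\neq V\subseteq N$, pull it back to the submodule $W\subseteq X$ fitting in $0\to M\to W\to V\to 0$, apply semistability of $X$ together with the first part of the lemma to conclude $\mu(V)\le a=\mu(N)$. The converse is the heart of the statement: given $M,N$ semistable of slope $a$, I would take an arbitrary submodule $0\neq U\subseteq X$ and form the intersection $U\cap M$ and the induced short exact sequence $0\to U\cap M\to U\to U/(U\cap M)\to 0$, where $U/(U\cap M)$ embeds in $X/M\cong N$. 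Semistability of $M$ bounds $\mu(U\cap M)\le a$ and semistability of $N$ bounds $\mu(U/(U\cap M))\le a$ (allowing for the degenerate cases where one of these submodules is zero), so the sandwich estimate forces $\mu(U)\le a=\mu(X)$.

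The main obstacle I anticipate is not any single inequality but the bookkeeping of degenerate cases in the converse: the subquotients $U\cap M$ and $U/(U\cap M)$ may individually be zero, in which case the slope of that piece is undefined and the relevant bound must be argued directly (e.g.\ if $U\cap M=0$ then $U\hookrightarrow N$ and semistability of $N$ applies verbatim, while if $U\subseteq M$ then semistability of $M$ applies). I would handle this by treating $U\subseteq M$, $U\cap M=0$, and the generic case separately, in each reducing to the monotonicity already established. Everything else is the mechanical cancellation of the mediant inequality, which is why I expect the proof to be short once the additivity of $t$ and $\theta$ is recorded.
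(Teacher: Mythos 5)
Your proof is correct and complete, and it fills in an argument the paper itself omits: the paper states this lemma as well known and simply cites \cite{MR}, giving no proof. Your two-step argument --- the mediant computation for the equivalences and the sandwich inequality, followed by the intersection/pullback argument (with the degenerate cases $U\subseteq M$ and $U\cap M=0$ treated separately) for the last assertion --- is exactly the standard proof behind that citation, so nothing further is needed.
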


By the above lemma, if $M=M_1\oplus M_2$ with both $M_1$ and $M_2$
indecomposable, then $M$ is semistable if and only if $M_1$ and
$M_2$ are semistable and $\mu(M_1)=\mu(M_2)=\mu(M)$. Hence, for
semistable $A$-modules, it is enough to consider the indecomposable
ones.

\medskip

\begin{lem}\label{equivalent definition} Let $X\in$ $\mod A$. Then $X$ is
 semistable {\rm(}resp. stable{\rm)} if and only if
$\mu(X)\leq \mu(U)$ {\rm(}resp. $\mu(X) < \mu(U)${\rm)} for all
proper quotient modules $U$.
\end{lem}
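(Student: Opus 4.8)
The plan is to exploit the natural bijection between proper submodules and proper quotient modules of $X$ together with Lemma~\ref{ses}. Every proper nonzero submodule $0\neq M\subsetneq X$ fits into a short exact sequence $0\to M\to X\to N\to 0$ with $N=X/M$ a proper nonzero quotient, and conversely every proper nonzero quotient of $X$ arises in this way from its kernel. This sets up a bijection between the proper nonzero submodules of $X$ and its proper nonzero quotients, which I will use throughout to match up conditions term by term.

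First I would treat the semistable case. Fixing such a short exact sequence $0\to M\to X\to N\to 0$, Lemma~\ref{ses} yields the equivalence $\mu(M)\leq\mu(X)\Leftrightarrow\mu(X)\leq\mu(N)$. Letting $M$ range over all proper nonzero submodules and invoking the bijection above, the defining condition ``$\mu(M)\leq\mu(X)$ for every proper nonzero submodule $M$'' is transformed, one sequence at a time, into ``$\mu(X)\leq\mu(N)$ for every proper nonzero quotient $N$''. This is precisely the claimed characterization of semistability.

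For the stable case I would upgrade these to strict inequalities. Here I would use that, since $\bfdim X=\bfdim M+\bfdim N$ and both the numerator $\sum_{i}\theta_id_if_i$ and the denominator $\sum_{i}d_if_i$ of $\mu$ are additive in the dimension vector with the denominator strictly positive on $\bbn I\backslash\{0\}$, the slope $\mu(X)$ is the mediant of $\mu(M)$ and $\mu(N)$. A short computation then shows that $\mu(X)-\mu(M)$ and $\mu(X)-\mu(N)$ are, respectively, a positive and a negative scalar multiple of $\mu(N)-\mu(M)$. Consequently $\mu(M)<\mu(X)\Leftrightarrow\mu(X)<\mu(N)$, and running this over the same bijection converts the stability condition phrased with submodules into the one phrased with quotients.

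The argument is essentially formal once Lemma~\ref{ses} is available, so no genuine obstacle arises. The only point demanding a little care is the passage to strict inequalities in the stable case: one must rule out the possibility that $\mu(X)=\mu(N)$ while $\mu(M)<\mu(X)$, and the mediant observation does exactly this by forcing $\mu(X)$ to lie strictly between $\mu(M)$ and $\mu(N)$ whenever the two differ. A minor bookkeeping point worth stating explicitly is the convention that ``proper'' excludes both the zero object and $X$ itself, so that the submodule--quotient bijection is exact and no degenerate sequences are counted on either side.
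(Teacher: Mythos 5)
Your proof is correct and complete. Note that the paper does not actually prove this lemma: it is stated as ``well known'' with a pointer to \cite{MR}, so there is no argument of the paper's to compare yours against; what you wrote is the standard argument that such a citation implicitly invokes. The semistable case is indeed a formal consequence of Lemma~\ref{ses} together with the bijection $M\mapsto X/M$ between proper nonzero submodules and proper nonzero quotients. The substantive point is exactly the one you flagged: Lemma~\ref{ses} as stated gives only the equivalence of weak inequalities, and negating it yields $\mu(M)>\mu(X)\Leftrightarrow\mu(X)>\mu(N)$, which is not the strict equivalence needed for stability; one must separately exclude the case $\mu(M)<\mu(X)=\mu(N)$. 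Your mediant computation does this correctly: writing $\mu=\theta/\kappa$ with $\kappa$ a linear form that is strictly positive on nonzero dimension vectors, one gets $\mu(X)-\mu(M)=\frac{\kappa(N)}{\kappa(M)+\kappa(N)}\bigl(\mu(N)-\mu(M)\bigr)$ and $\mu(X)-\mu(N)=-\frac{\kappa(M)}{\kappa(M)+\kappa(N)}\bigl(\mu(N)-\mu(M)\bigr)$, so $\mu(X)$ lies strictly between $\mu(M)$ and $\mu(N)$ whenever these differ, which gives $\mu(M)<\mu(X)\Leftrightarrow\mu(X)<\mu(N)$ as required.
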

\medskip

For each $a\in \mathbb{Q}$, denote by $\mod^{a}A$ the full
subcategory of $\mod A$ consisting of semistable $A$-modules of
slope $a$. By convention, we always assume that $\mod^{a}A$ consists of the zero module $0$.

\begin{lem}\label{prop-of-sub}
For each $a\in \mathbb{Q}$,           the category $\mod^{a}A$ is an abelian
subcategory of $\mod A$ whose simple objects are the  indecomposable
stable $A$-modules of slope $a$. Moreover, we have that
$\Hom(\mod^aA,\mod^bA)=0$ whenever $a> b$.
\end{lem}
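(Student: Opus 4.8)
The plan is to establish the three assertions of Lemma~\ref{prop-of-sub} in the order in which their proofs naturally build on one another: first the abelian structure, then the identification of the simple objects, and finally the vanishing of $\Hom$ between slopes.

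First I would verify that $\mod^a A$ is an abelian subcategory. The category is clearly closed under finite direct sums (this is exactly the remark following Lemma~\ref{ses}), so the key point is closure under kernels and cokernels. Given a morphism $f\colon X\to Y$ between semistable modules of slope $a$, I would consider the canonical factorization through the image and look at the short exact sequences $0\to \ker f\to X\to \operatorname{im} f\to 0$ and $0\to \operatorname{im} f\to Y\to \operatorname{coker} f\to 0$. The strategy is to show that every subquotient appearing here is again semistable of slope $a$. Since $\ker f$ is a submodule of the semistable module $X$, semistability forces $\mu(\ker f)\le a$; dually, using Lemma~\ref{equivalent definition}, $\operatorname{coker} f$ is a quotient of $Y$ so $\mu(\operatorname{coker} f)\ge a$, and $\operatorname{im} f$ is both a quotient of $X$ and a submodule of $Y$, which pins down $\mu(\operatorname{im} f)=a$. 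Feeding these slope inequalities back into Lemma~\ref{ses} applied to the two short exact sequences should force all the slopes to equal $a$ and, via the semistability criterion in Lemma~\ref{ses}, show that $\ker f$, $\operatorname{im} f$, and $\operatorname{coker} f$ are all semistable of slope $a$. This is the step I expect to require the most care, since I must chase the inequalities in both directions and correctly invoke the dual (quotient) characterization of semistability.

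Next I would identify the simple objects of $\mod^a A$. An object of $\mod^a A$ is simple precisely when it has no nonzero proper subobject \emph{within} $\mod^a A$. I would argue that a stable module $X$ of slope $a$ admits no such subobject: any nonzero proper submodule $U$ of $X$ inside $\mod^a A$ has $\mu(U)=a$, contradicting the strict inequality $\mu(U)<a$ in the definition of stability; hence stable modules are simple in $\mod^a A$. Conversely, if $X$ is semistable of slope $a$ but not stable, there is a proper submodule $U$ with $\mu(U)=a=\mu(X)$; by Lemma~\ref{ses} this $U$ is itself semistable of slope $a$ and lies in $\mod^a A$ as a nonzero proper subobject, so $X$ is not simple in $\mod^a A$. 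This shows the simple objects are exactly the stable modules of slope $a$, and indecomposability is automatic since a simple object in an abelian category is indecomposable.

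Finally, for the $\Hom$-vanishing I would take $X\in\mod^a A$, $Y\in\mod^b A$ with $a>b$ and a morphism $f\colon X\to Y$, and suppose $f\neq 0$. Then $\operatorname{im} f$ is a nonzero quotient of $X$ and a nonzero submodule of $Y$. By Lemma~\ref{equivalent definition} applied to $X$, every quotient has slope at least $a$, so $\mu(\operatorname{im} f)\ge a$; by semistability of $Y$, every submodule has slope at most $b$, so $\mu(\operatorname{im} f)\le b$. Together these give $a\le b$, contradicting $a>b$; hence $f=0$. No single step here is difficult, so the overall obstacle remains the slope bookkeeping in the abelian-category argument, where I must be scrupulous about which inequalities are strict and about applying Lemma~\ref{ses} in precisely the form that yields semistability of the subquotients.
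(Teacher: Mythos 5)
The paper itself gives no proof of Lemma~\ref{prop-of-sub}: it is recorded as a known fact (it goes back to Reineke \cite{MR}), so there is no in-paper argument to compare yours against. Your proof is the standard one, and its overall structure is correct: closure of $\mod^{a}A$ under kernels, images and cokernels via the two short exact sequences; identification of the simple objects with the stable modules of slope $a$; and the $\Hom$-vanishing for $a>b$ by trapping the slope of a nonzero image between $a$ (as a quotient of the source, using Lemma~\ref{equivalent definition}) and $b$ (as a submodule of the target). Those last two parts are argued exactly as they should be.

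One step needs tightening. To get $\mu(\ker f)=a$ from $\mu(X)=\mu(\operatorname{im} f)=a$ you appeal to Lemma~\ref{ses}, but the equivalences recorded there run in one direction only: they yield $\mu(\ker f)\leq a$, and the slope pattern $\mu(\ker f)<\mu(X)=\mu(\operatorname{im} f)$ is consistent with every assertion literally contained in Lemma~\ref{ses} (the $\leq$-equivalences, the min/max bound, and the final statement, whose hypothesis already requires all three slopes to be equal). What you actually need is the ``two out of three'' property: if two of the three slopes in a short exact sequence coincide, so does the third. This is true, and immediate from the fact underlying Lemma~\ref{ses}: dimension vectors add along short exact sequences, so $\mu(X)$ is a weighted average of $\mu(\ker f)$ and $\mu(\operatorname{im} f)$ with strictly positive weights; equivalently, compute $\mu(\ker f)$ directly from $\theta(\ker f)=\theta(X)-\theta(\operatorname{im} f)$ together with the corresponding identity for the denominators. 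The same remark applies to $\mu(\operatorname{coker} f)$, and to your use of Lemma~\ref{ses} to see that a proper submodule $U$ of equal slope is semistable --- though there a direct argument is even simpler: submodules of $U$ are submodules of $X$, hence have slope $\leq a=\mu(U)$. With this one-line supplement your proof is complete and correct.
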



\section{Category of semistable $kQ$-modules of slope $a$ }

In this section, we recall some results from \cite{IPT,IT} which
will be needed in the next section in order to prove our main result.

In the following, we assume that $k$ is an algebraically closed field, $Q$ is an acyclic quiver,
and $A$ is the path algebra $kQ$. Thus, the set $I$ of isoclasses of simple $A$-modules
is identified with the vertex set $Q_0$, and $f_i=\dim_{k}\End_{A}(S_i)=1$ for all $i\in I$.
In case $Q$ is a tame quiver, we denote by $\delta$
the minimal positive imaginary root of $Q$, and let $\mathcal{P}$ and
$\mathcal{I}$ be the preprojective and preinjective components,
respectively, and let $\mathcal{R}$ be the union of all tubes of the Auslander--Reiten quiver $\Gamma_A$.

\medskip

We first introduce a different stability notion as follows
\cite{HP}. Let $\theta=(\theta_i)_{i\in I}$ be a weight for $Q$. We
denote $\theta(\rm{\bf dim}M)$ by $\theta(M)$. A module $M\in$ $\mod
A$ is called $\theta$-semistable (resp. stable) if $\theta(M)=0$ and
$\theta(U)\leq 0$ (resp. $\theta(U)< 0$) for any proper submodule
$0\neq U\subseteq M$. Finally, by $\mod_{\theta}A$ we denote the
full subcategory of $\mod A$ consisting of all the
$\theta$-semistable modules.

\begin{lem} \label{stab-cond-equiv} Let $\theta=(\theta_i)_{i\in I}$ be
a weight. Then for each $a\in \mathbb{Q}$, $\mod^{a}A$= $\mod_{\theta '}A$, where
$\theta'=\theta-a\theta_0$ and $\theta_0= (1)_{i\in I}$.
Conversely, for a weight $\omega=(\omega_i)_{i\in I}$, there exists
a weight $\theta=(\theta_i)_{i\in I}$ and $a\in \mathbb{Q}$ such
that $\mod_{\omega}A= $ $\mod^{a}A$.
\end{lem}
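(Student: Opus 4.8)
The plan is to show that the two semistability conditions are literally the same inequality, read through the linear form $\theta'=\theta-a\theta_0$. First I would record the elementary identity
$$\theta'(d)=\sum_{i\in I}(\theta_i-a)d_if_i=\theta(d)-a\sum_{i\in I}d_if_i$$
valid for every $d\in\mathbb Z I$, together with the observation that $\sum_{i\in I}d_if_i>0$ whenever $0\neq d\in\mathbb N I$. Dividing by this positive quantity immediately yields the two equivalences
$$\theta'(d)=0\Longleftrightarrow\mu(d)=a,\qquad \theta'(d)\leq 0\Longleftrightarrow\mu(d)\leq a,$$
which are the only computations the argument really needs.

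Next I would translate the membership conditions. A nonzero module $M$ lies in $\mod^aA$ exactly when $\mu(M)=a$ and $\mu(U)\leq a$ for every proper submodule $0\neq U\subset M$; here the slope-$a$ requirement is what turns the defining inequality $\mu(U)\leq\mu(M)$ into $\mu(U)\leq a$. Applying the two equivalences above with $d=\bfdim M$ and $d=\bfdim U$ converts these conditions into $\theta'(M)=0$ and $\theta'(U)\leq 0$, which is precisely the assertion that $M\in\mod_{\theta'}A$. Since each implication is reversible, this gives the equality $\mod^aA=\mod_{\theta'}A$; the zero module lies in both sides by convention, so no separate check is required there.

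For the converse statement I would simply reparametrize: given $\omega=(\omega_i)_{i\in I}$, take $a=0$ and $\theta=\omega$. Then $\theta'=\theta-a\theta_0=\omega$, and the first part applied with this choice gives $\mod^0A=\mod_{\omega}A$, as required. More generally any $a\in\mathbb Q$ together with $\theta=\omega+a\theta_0$ would serve, since then again $\theta-a\theta_0=\omega$.

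I do not expect a genuine obstacle here: the lemma is a bookkeeping identity relating two parametrizations of one and the same stability notion. The only points that demand a little care are the strict positivity of $\sum_{i\in I}d_if_i$ on nonzero dimension vectors, which is exactly what allows the sign of $\theta'$ to control the comparison of $\mu$ with $a$, and the treatment of the zero module, which the stated conventions already absorb.
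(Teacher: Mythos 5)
Your proof is correct and follows essentially the same route as the paper: both rest on the equivalences $\mu(d)=a\Leftrightarrow(\theta-a\theta_0)(d)=0$ and $\mu(d)\leq a\Leftrightarrow(\theta-a\theta_0)(d)\leq 0$, which hold because $\sum_{i\in I}d_if_i>0$ for nonzero $d\in\mathbb{N}I$. You merely spell out the translation of the membership conditions and the converse (via $a=0$, $\theta=\omega$) in more detail than the paper, which leaves these as immediate consequences.
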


\begin{proof} By the definition, for a weight $\theta=(\theta_i)_{i\in I}$, if $\mu$ is the
slope function associated with $\thz$, then $\mu(M)= a$ if and only if $(\theta - a\theta_0)(M)=0$.
Moreover, $\mu(M)\leq a$ if and only if $(\theta - a\theta_0)(M)\leq 0$. This implies the
desired statements.
\end{proof}

By \cite{IPT}, we have the following statement.

\begin{thm}\label{subcat-equiv}  Let $Q$ be a tame quiver and $\theta=(\theta_i)_{i\in I}$ be
a weight for $A=kQ$. Then for each $a\in\mathbb Q$, the subcategory $\mod^{a}A$ is
equivalent to one of the following two categories:
\begin{itemize}
\item[(1)] the module category $\mod kQ'$ of the path algebra $kQ'$ for a Dynkin or tame quiver $Q'$;

\item[(2)] the full subcategory $\mathcal{R'}$ consisting of all the regular objects of $\mod kQ'$ with
$Q'$ a possibly disconnected tame quiver (i.e., a quiver with one
tame component and all other components (if any) Dynkin).
\end{itemize}
\end{thm}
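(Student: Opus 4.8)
The plan is to identify $\mathcal{W}:=\mod^{a}A$ as a wide subcategory of $\mod kQ$ and then to read off its isomorphism type from the Auslander--Reiten structure of $\Gamma_A$. First I would record the basic shape of $\mathcal{W}$: by Lemma~\ref{prop-of-sub} it is an abelian subcategory of $\mod kQ$ whose simple objects are exactly the indecomposable stable modules of slope $a$, and by the slope inequalities of Lemma~\ref{ses} it is closed under extensions (if the two ends of a short exact sequence lie in $\mathcal{W}$ then all three terms share the slope $a$ and the middle term is semistable). Thus $\mathcal{W}$ is an exact abelian, extension-closed (i.e.\ wide) subcategory of $\mod kQ$. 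Since $\mod kQ$ is hereditary and extensions inside $\mathcal{W}$ coincide with extensions in $\mod kQ$, the category $\mathcal{W}$ is itself a hereditary length category, so its type is governed by the Ext-quiver $Q'$ of its simple objects together with the relevant one-parameter families of simples.

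Next I would partition the simple objects of $\mathcal{W}$ according to their defect $\partial(-)=\langle\delta,-\rangle$, into the preprojective ($\partial<0$), regular ($\partial=0$) and preinjective ($\partial>0$) stable modules of slope $a$. The Hom-vanishing relations of the structure lemma recalled above, namely $\Hom(\mathcal{I},\mathcal{P}\cup\mathcal{T}_\lambda)=0$, $\Hom(\mathcal{T}_\lambda,\mathcal{P})=0$ and $\Hom(\mathcal{T}_\lambda,\mathcal{T}_{\lambda'})=0$ for $\lambda\neq\lambda'$, then constrain which arrows of $Q'$ can occur between these three classes. The crucial dichotomy is whether $\mathcal{W}$ possesses a nonzero projective object, equivalently (up to the preprojective/preinjective duality) whether $\mathcal{W}$ contains a stable module of nonzero defect.

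In the first case I would show $\mathcal{W}\simeq\mod kQ'$ for some quiver $Q'$. The natural route is via perpendicular-category and exceptional-sequence techniques in the spirit of Geigle--Lenzing and Schofield: a functorially finite wide subcategory of $\mod kQ$ is generated by an exceptional sequence, and the right perpendicular construction realizes it as $\mod kQ'$ for a hereditary algebra $kQ'$ with $|Q'_0|\le|Q_0|$. In the second case all simples of $\mathcal{W}$ have defect $0$, so every object of $\mathcal{W}$ is regular and $\mathcal{W}\subseteq\mathcal{R}$. Working tube by tube (using $\Hom(\mathcal{T}_\lambda,\mathcal{T}_{\lambda'})=0$ for $\lambda\neq\lambda'$ to split $\mathcal{W}$ as a direct product over tubes), I would show that the semistable modules of slope $a$ inside each tube again form a tube or a regular $\mathbb{A}$-type category, and assemble these pieces into the regular part $\mathcal{R}'$ of a possibly disconnected tame quiver $Q'$. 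Throughout, the bound $|Q'_0|\le|Q_0|$ together with the fact that a wide subcategory of the tame category $\mod kQ$ cannot itself be of wild type (its indecomposables and one-parameter families are bounded by those of $\mod kQ$) forces $Q'$ to be Dynkin or tame.

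I expect the second case to be the main obstacle. Determining precisely how the semistable objects of slope $a$ sit inside a fixed non-homogeneous tube, and then organizing the surviving homogeneous tubes into the genuine $\mathbb{P}^1(k)$-family of the tame component of $Q'$, requires a careful local analysis at the mouth of each tube; and verifying that the assembled category is exactly the regular part of a \emph{connected} tame quiver (rather than some larger or wild configuration) is the step where the tameness of $Q$ must really be used.
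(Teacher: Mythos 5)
First, a point of reference: the paper does not actually prove Theorem~\ref{subcat-equiv}; it imports it wholesale from \cite{IPT} (``By \cite{IPT}, we have the following statement''). So your sketch has to be judged against that reference, whose strategy (wide subcategories, exceptional sequences and perpendicular categories, tube-by-tube analysis) you have correctly identified in outline. Your opening reductions are also sound: $\mod^a A$ is a wide, hereditary, extension-closed subcategory by Lemmas~\ref{ses} and \ref{prop-of-sub}, extensions computed inside it agree with those in $\mod kQ$, and this excludes a wild $Q'$.

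The genuine gap is your dichotomy. You assert that $\mathcal{W}=\mod^a A$ has a nonzero projective object if and only if it contains a stable module of nonzero defect, and you then run the second case (``all simples have defect $0$'') toward conclusion (2). This equivalence is false, and its failure is precisely the delicate point of the theorem. Concretely, take $Q$ of type $\widetilde{A}_2$ with arrows $1\to 2$, $2\to 3$, $1\to 3$, and $\theta=(0,t,0)$ with $t>0$. The simple $S_2$ has defect $\langle\delta,\bfdim S_2\rangle=0$, so it is a quasi-simple in a rank-two tube; since $\mu(M)\le t$ for every module $M$ with equality exactly when $M\cong S_2^k$, we get $\mod^{t}A={\rm add}\,(S_2)\simeq \mod k\mathbb{A}_1$. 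Here every object of $\mathcal{W}$ has defect zero, yet $\mathcal{W}$ is a Dynkin module category with a projective generator, and it is certainly \emph{not} the regular part of any tame quiver (such a part has infinitely many indecomposables, one homogeneous tube for each point of $\mathbb{P}^1(k)$). Your two criteria disagree on this example, and the defect-based one routes it into your second case, where your argument would return the wrong answer. The correct dividing line between conclusions (1) and (2) is not the defect of the simples but whether $\mathcal{W}$ contains an indecomposable of dimension vector $\delta$ (equivalently, whether the homogeneous tubes are semistable of slope $a$, which forces $a=\mu(\delta)$); this is exactly what Proposition~\ref{decr} and Corollary~\ref{classification of slope}(3)(4) of the paper encode.

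Two further steps are asserted rather than proved. In your first case you invoke ``a functorially finite wide subcategory of $\mod kQ$ is generated by an exceptional sequence,'' but you never verify that $\mathcal{W}$ is functorially finite (equivalently, has a projective generator); this is not automatic for wide subcategories of a tame module category --- the full regular subcategory $\mathcal{R}$ is wide and is a counterexample --- so it must be deduced from semistability, e.g.\ by showing that in this case only finitely many stable modules occur outside the homogeneous tubes. And in your second case, the assembly of the surviving homogeneous tubes together with the leftover wings of the non-homogeneous tubes into the regular part of a single \emph{connected} tame quiver with the correct tube ranks is, as you yourself note, the main obstacle; that assembly is the actual content of \cite{IPT}, and without it the proposal is a plausible framework rather than a proof.
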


The following statement is an easy consequence of the above theorem.

\begin{cor}\label{Euler-type-equiv}  Let $Q$ and $Q'$ be as in Theorem {\rm\ref{subcat-equiv}}
with $A=kQ$ and $B=kQ'$. Let $\langle-,-\rangle_{B}$ and
$\langle-,-\rangle_{A}$ be the Euler forms associated to $B$ and $A$,
respectively. Then for any $M,N \in$ $\mod B$, $M,N$ can be viewed as
$A$-modules and, moreover,
$$\langle M,N\rangle_{B}= \langle M,N\rangle_A.$$
\end{cor}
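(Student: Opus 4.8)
The plan is to prove Corollary~\ref{Euler-type-equiv} by reducing the claimed equality of Euler forms to the well-known fact that, for a hereditary algebra, the Euler form $\langle-,-\rangle$ depends only on the dimension vectors of its arguments. Concretely, $\langle M,N\rangle_A=\sum_{i\in Q_0}(\bfdim M)_i(\bfdim N)_i-\sum_{\alpha\in Q_1}(\bfdim M)_{s(\alpha)}(\bfdim N)_{t(\alpha)}$, and similarly for $B=kQ'$. Thus the statement is really an assertion about dimension vectors: once $M,N\in\mod B$ are reinterpreted as $A$-modules via the equivalence of Theorem~\ref{subcat-equiv}, the two bilinear forms must agree. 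The first thing I would make precise is the passage from $B$-modules to $A$-modules: in case~(1) this is the equivalence $\mod kQ'\simeq\mod^a A\hookrightarrow\mod A$, and in case~(2) it is the inclusion $\mathcal R'\hookrightarrow\mod^a A\hookrightarrow\mod A$ composed with the equivalence. In either case $\mod^a A$ is an exact abelian subcategory of $\mod A$ (Lemma~\ref{prop-of-sub}), so I would record that this embedding is exact.

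First I would fix an $A$-module $M$ lying in $\mod^a A$ and express $\langle M,N\rangle_A$ intrinsically as an alternating sum of $\dim_k\Ext^i$-groups. Because $A=kQ$ is hereditary, $\Ext^i_A=0$ for $i\geq 2$, so $\langle M,N\rangle_A=\dim_k\Hom_A(M,N)-\dim_k\Ext^1_A(M,N)$ exactly as in the definition. The crucial observation is that $\mod^a A$ is not merely a full subcategory but an \emph{extension-closed} abelian subcategory of $\mod A$; this is the content of Lemma~\ref{prop-of-sub} together with the fact that $\mod^a A$ is the $\theta'$-semistable subcategory for an appropriate weight (Lemma~\ref{stab-cond-equiv}). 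Extension-closure guarantees that for $M,N\in\mod^a A$ the space $\Ext^1_{\mod^a A}(M,N)$, computed inside the abelian category $\mod^a A$, coincides with $\Ext^1_A(M,N)$ computed in the ambient category $\mod A$. Likewise $\Hom_{\mod^a A}(M,N)=\Hom_A(M,N)$ since $\mod^a A$ is a full subcategory. Transporting these identifications across the equivalence of Theorem~\ref{subcat-equiv} with $\mod kQ'$ (resp. $\mathcal R'$), I obtain
$$\langle M,N\rangle_A=\dim_k\Hom_{\mod^a A}(M,N)-\dim_k\Ext^1_{\mod^a A}(M,N)=\dim_k\Hom_B(M,N)-\dim_k\Ext^1_B(M,N)=\langle M,N\rangle_B,$$
where the middle equality uses that an equivalence of abelian categories preserves $\Hom$- and $\Ext^1$-dimensions.

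The main obstacle I anticipate is the case~(2) clause, where $B=kQ'$ but the equivalence lands in the regular subcategory $\mathcal R'\subsetneq\mod kQ'$ rather than in all of $\mod kQ'$. Here I must be careful that $\Ext^1$ computed inside $\mathcal R'$ agrees with $\Ext^1_B$ computed in the full module category $\mod kQ'$; this requires that $\mathcal R'$ be extension-closed in $\mod kQ'$, which holds because the regular modules of a tame (or disconnected tame) algebra form an extension-closed exact abelian subcategory. Equivalently, since $\mathcal R'$ is the category of regular modules, for regular $M,N$ every extension of $N$ by $M$ is again regular, so no $\Ext^1$ is lost by restricting to $\mathcal R'$. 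Provided this extension-closure is invoked on both sides, the alternating-sum computation goes through uniformly, and the equality $\langle M,N\rangle_B=\langle M,N\rangle_A$ follows in both cases. Finally I would remark that the result is also transparent on dimension vectors: the equivalence identifies the Grothendieck group $K_0(\mod^a A)$ with $K_0(B)$ compatibly with the Euler pairings, which gives a second, basis-free proof.
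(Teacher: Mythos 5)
Your proof is correct and takes the paper's (implicit) approach: the paper offers no written proof beyond calling the corollary ``an easy consequence'' of Theorem \ref{subcat-equiv}, and your argument---expressing both Euler forms homologically as $\dim_k\Hom-\dim_k\Ext^1$ and matching them across the equivalence using fullness, extension-closure of $\mod^aA$ in $\mod A$ (Lemma \ref{ses}) and of $\mathcal{R}'$ in $\mod kQ'$, and the fact that an exact equivalence preserves $\Hom$ and Yoneda $\Ext^1$---is exactly what that phrase conceals, including the correct observation that in case (2) the statement only makes sense for $M,N\in\mathcal{R}'$. The one blemish is your closing Grothendieck-group ``second proof,'' which is circular in spirit and imprecise in case (2) (there the image of $K_0(\mathcal{R}')$ is in general a proper sublattice of $K_0(B)$, and compatibility of the pairings is precisely what is to be shown), but since it is offered only as a side remark the main argument stands.
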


\begin{prop}\label{decr} Let $Q$ be a tame quiver and $\theta=(\theta_i)_{i\in I}$ be
a weight for $A=kQ$. If there is an indecomposable $A$-module of dimension vector
$m\delta$ in $\mod^{a}A$ for some $m\geq 1$, then there is an indecomposable
$A$-module of dimension vector $\delta$ in $\mod^{a}A$.
\end{prop}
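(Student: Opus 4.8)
The plan is to place $M$ inside a tube of $\Gamma_A$ and exhibit an explicit regular submodule of dimension vector $\delta$ that inherits semistability from $M$. If $m=1$ there is nothing to prove, so assume $m\geq 2$, and let $M$ be an indecomposable module of dimension vector $m\delta$ lying in $\mod^a A$, so that $\mu(M)=a$. Since $\langle\delta,m\delta\rangle=m\langle\delta,\delta\rangle=0$, the module $M$ has defect $0$; being indecomposable it is therefore regular and lies in a single stable tube $\mathcal{T}$, say of rank $r\geq 1$, with quasi-simple modules $E_1,\dots,E_r$ cyclically permuted by $\tau$ (indices read modulo $r$). For a homogeneous tube $r=1$ and $\bfdim E_1=\delta$.

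First I would use the uniserial structure of $\mathcal{T}$ in the regular direction. Writing $E_i[\ell]$ for the indecomposable regular module with regular socle $E_i$ and quasi-length $\ell$, its regular composition factors (read from the socle) are $E_i,E_{i+1},\dots,E_{i+\ell-1}$, so $\bfdim E_i[\ell]=\sum_{j=0}^{\ell-1}\bfdim E_{i+j}$. Because the dimension vectors of a full $\tau$-period of quasi-simples satisfy $\sum_{i=1}^{r}\bfdim E_i=\delta$, a quasi-length $\ell=mr+s$ with $0\leq s<r$ yields $\bfdim E_i[\ell]=m\delta+\sum_{j=0}^{s-1}\bfdim E_{i+j}$, which equals $m\delta$ only when $s=0$; hence $M\cong E_i[mr]$ for some $i$. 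The regular submodules of $E_i[mr]$ form the chain $E_i[1]\subset E_i[2]\subset\cdots\subset E_i[mr]$, and I single out $W:=E_i[r]\subseteq M$. Its regular composition factors are exactly all $r$ quasi-simples of $\mathcal{T}$ once each, whence $\bfdim W=\delta$; in particular $W$ is indecomposable.

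It then remains to check that $W\in\mod^a A$. Since $\mu$ depends only on the ray spanned by a dimension vector, $\mu(W)=\mu(\delta)=\mu(m\delta)=\mu(M)=a$. For semistability, let $0\neq U\subsetneq W$ be any proper submodule; then $U$ is also a proper submodule of $M$ (it is proper since $\bfdim W=\delta\neq m\delta=\bfdim M$), and semistability of $M$ gives $\mu(U)\leq\mu(M)=a=\mu(W)$. Hence $W$ is a semistable module of slope $a$, that is, an indecomposable module of dimension vector $\delta$ in $\mod^a A$, as required.

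I expect the only genuine content to lie in the second paragraph: pinning down that $M$ must be $E_i[mr]$ and that the length-$r$ regular submodule has dimension vector exactly $\delta$, which rests on the standard tube facts $\sum_{i}\bfdim E_i=\delta$ and the uniseriality of tubes in the regular direction. Once the correct submodule $W$ is identified, the transfer of semistability from $M$ to $W$ is purely formal, since every submodule of $W$ is a submodule of $M$ of the same slope. Alternatively one could argue inside the abelian category $\mod^a A$ using Theorem \ref{subcat-equiv}, but the direct tube computation seems the most transparent.
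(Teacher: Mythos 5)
Your proof is correct and follows essentially the same strategy as the paper: exhibit a submodule of $M$ of dimension vector $\delta$ and transfer semistability to it using the equality of slopes $\mu(\delta)=\mu(m\delta)$. The difference is one of detail rather than of route: the paper simply asserts that $M$ has a submodule $N$ with $\bfdim N=\delta$ and invokes Lemma \ref{prop-of-sub} for its semistability, whereas you supply the tube-theoretic justification (defect zero forces $M$ into a stable tube, uniseriality pins $M$ down as $E_{i,mr}$ in the paper's notation, and the quasi-length-$r$ regular submodule has dimension vector $\delta$) and verify semistability directly from the definition; this added detail also establishes the indecomposability of the dimension-$\delta$ submodule, a point the statement requires but the paper's proof leaves implicit.
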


\begin{proof} Let $M$ be an indecomposable $A$-module in $\mod^{a}A$ with
$\bfdim M= m\delta$ for some $m\geq 1$. Then $M$ has a submodule $N$ with
$\bfdim N=\delta$ which gives an exact sequence
$$0 \longrightarrow N \longrightarrow M \longrightarrow M/N \longrightarrow 0.$$
Since $\mu(N)=\mu(\delta)=\mu(M)$, we have by Lemma \ref{prop-of-sub}
that $N$ is semistable. Hence, $N$ lies in $\mod^{a}A$.
\end{proof}

\begin{cor}\label{classification of slope} We keep the notations as in the above proposition and take $a\in\mathbb Q$.

{\rm (1)} If $a\neq \mu(\delta)$, then $\mod^{a}A$ is equivalent to
$\mod kQ'$, where $Q'$ is a Dynkin quiver.

{\rm (2)} If $\mod^aA \cap \mathcal{P} \neq \varnothing$ or
$\mod^aA \cap \mathcal{I} \neq \varnothing$, then $\mod^{a}A$ is
equivalent to $\mod kQ'$, where $Q'$ is a Dynkin or tame
quiver. Moreover, if $\mod^{a}A$ contains an indecomposable module with
dimension vector $\delta$, then $Q'$ is a tame quiver.
Otherwise, $Q'$ is a Dynkin quiver.

{\rm (3)} If $\mod^aA \subset \mathcal{R}$, and has no indecomposable
object with dimension $\delta$, then ${\rm mod}^aA$ is equivalent to
$\mod kQ'$, where $Q'$ is a Dynkin quiver.

{\rm (4)} If $\mod^aA \subset \mathcal{R}$, and has an indecomposable
object with dimension $\delta$, then $\mod^{a}A$ is equivalent to
the full subcategory consisting of all the regular objects of $\mod
kQ'$, where $Q'$ is a possibly disconnected tame quiver.
\end{cor}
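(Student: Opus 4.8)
The plan is to use Theorem \ref{subcat-equiv} to replace $\mod^{a}A$ by an equivalent abelian category $\mathcal{C}$ that is either $\mod kQ'$ (with $Q'$ Dynkin or tame) or the regular subcategory $\mathcal{R}'$, and then to decide which alternative occurs by tracking two equivalence-invariant quantities: the vanishing of the Tits form on an indecomposable object, and the sign of the defect. Write $B=kQ'$, let $q_A(d)=\langle d,d\rangle_A$ and $q_B(d)=\langle d,d\rangle_B$, and let $\delta'$ be the minimal positive imaginary root of the tame component of $Q'$ when $Q'$ is tame. I first record the \emph{tame versus Dynkin} dichotomy. An equivalence of abelian categories preserves indecomposability, and by Corollary \ref{Euler-type-equiv} it preserves the Euler form and hence the Tits form; so an indecomposable $X\in\mod^{a}A$ satisfies $q_A(\bfdim_A X)=0$ if and only if the corresponding object of $\mathcal{C}$ is an imaginary root for $B$. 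For the tame algebra $A$ one has $q_A(\bfdim_A X)=0$ exactly when $\bfdim_A X\in\mathbb{Z}_{>0}\delta$, while $\mathcal{C}$ realizes an imaginary root of $B$ precisely when $Q'$ is tame (in alternative (1)) or in alternative (2). Combined with Proposition \ref{decr}, this shows that $\mod^{a}A$ contains an indecomposable of dimension $\delta$ if and only if $Q'$ is tame or we are in alternative (2); equivalently, the absence of a $\delta$-dimensional indecomposable forces $\mathcal{C}=\mod kQ'$ with $Q'$ Dynkin.

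The heart of the argument is a comparison of the two defects. Suppose $\mod^{a}A$ contains an indecomposable $D$ with $\bfdim_A D=\delta$. Then for every $X\in\mod^{a}A$ one has $\langle D,X\rangle_A=\langle\delta,\bfdim_A X\rangle_A=\partial_A(X)$, where $\partial_A$ is the defect of $A$. Viewed as a $B$-module, $D$ is indecomposable with $q_B(\bfdim_B D)=q_A(\delta)=0$, so $\bfdim_B D=m\delta'$ for some integer $m\ge 1$; hence $\langle D,X\rangle_B=m\langle\delta',\bfdim_B X\rangle_B=m\,\partial_B(X)$, where $\partial_B$ is the defect of $B$. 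Corollary \ref{Euler-type-equiv} identifies the two pairings, giving the proportionality
$$\partial_A(X)=m\,\partial_B(X)\qquad (X\in\mod^{a}A),\qquad m\ge 1.$$
Since $m>0$, an object of $\mod^{a}A$ is preprojective (respectively regular, preinjective) as an $A$-module if and only if it is so as a $B$-module. Establishing this proportionality, and in particular producing $D$ and identifying $\bfdim_B D$ as a positive multiple of $\delta'$, is the step I expect to require the most care; it is where Proposition \ref{decr} and Corollary \ref{Euler-type-equiv} do the essential work.

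With these two inputs the four statements follow by a short case analysis. For (1), if $a\neq\mu(\delta)$ then no object of $\mod^{a}A$ can have dimension $\delta$, since its slope would be $\mu(\delta)\neq a$; by the first paragraph $\mathcal{C}=\mod kQ'$ with $Q'$ Dynkin, and the same reasoning yields the Dynkin conclusion of (3). For (2), if $\mod^{a}A$ contains a preprojective or preinjective $A$-module $M$, then either $\mod^{a}A$ has no $\delta$-dimensional indecomposable (whence $\mathcal{C}=\mod kQ'$ Dynkin by the first paragraph) or it does, in which case the proportionality applies and $\partial_A(M)\neq 0$ forces $\partial_B(M)\neq 0$, so $M$ is non-regular over $B$, $\mathcal{C}\neq\mathcal{R}'$, and $\mathcal{C}=\mod kQ'$; the tame/Dynkin refinement is exactly the dichotomy of the first paragraph. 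For (4), the hypothesis $\mod^{a}A\subset\mathcal{R}$ gives $\partial_A\equiv 0$ on $\mod^{a}A$, hence $\partial_B\equiv 0$ by the proportionality; were $\mathcal{C}=\mod kQ'$, an indecomposable projective $B$-module would lie in $\mathcal{C}$ and be preprojective over $B$ with $\partial_B<0$, a contradiction, so $\mathcal{C}=\mathcal{R}'$, which is alternative (2). This completes the four cases.
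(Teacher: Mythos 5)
Your proposal is correct and follows essentially the same route as the paper: both rest on Theorem \ref{subcat-equiv} for the dichotomy, Corollary \ref{Euler-type-equiv} to transfer isotropic dimension vectors (hence imaginary roots) and the Euler pairing across the equivalence, and Proposition \ref{decr} to pass from $m\delta$ to $\delta$. Your defect proportionality $\partial_A(X)=m\,\partial_B(X)$ is just a cleaner, uniform packaging of the pairing contradictions the paper runs separately in parts (2) and (4) (there, pairing a $\delta'$-dimensional regular $B$-module against a preprojective/preinjective $N_0$).
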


\begin{proof} (1) Suppose that $\mod^{a}A$ is equivalent to $\mod kQ'$, where $Q'$
is a tame quiver. By Corollary \ref{Euler-type-equiv}, there is an
indecomposable module $M$ in $\mod^{a}A$ with $\bfdim M= m\delta$
for some $m\geq 1$. Then $a= \mu(M)=\mu(m\delta)=\mu(\delta)$, which
is a contradiction. Similarly,  $\mod^{a}A$ is not equivalent to the
full subcategory consisting of all the regular objects of $\mod
kQ'$, where $Q'$ is a possibly disconnected tame quiver. The
statement follows from Theorem \ref{subcat-equiv}.

(2) For the first statement, suppose that $\mod^{a}A$
is equivalent to $\mathcal{R'}$, where $\mathcal{R'}$ denotes the
full subcategory consisting of all the regular objects of $\mod kQ'$
with $Q'$ a possibly disconnected tame quiver. Let $\delta '$
be the minimal positive imaginary root of $Q'$ and take $M \in
\mathcal{R'}$ with $\textbf{dim} M= \delta '$. Then for all $N$ in
$\mod^{a}A$,
$$\langle M,N\rangle_{A}=\langle M,N\rangle_{B}=0.$$ Here
$M,N$ are viewed as both $A$-modules and $B$-modules. But since
$\mod^aA \cap \mathcal{P} \neq \varnothing$ or $\mod^aA \cap
\mathcal{I} \neq \varnothing$, there exists a module $N_0 \in$
$\mod^{a}A$ such that $\langle M,N_0\rangle_{A}\neq 0 $, a
contradiction. The second statement follows from Corollary
\ref{Euler-type-equiv}.

(3) By Proposition \ref{decr}, there exists no indecomposable module
in $\mod^{a}A$ with dimension vector $m\delta,m\geq 1$. Then the
statement follows from Corollary \ref{Euler-type-equiv} and Theorem
\ref{subcat-equiv}.

(4) The proof is similar to (2).

\end{proof}

\section{The slopes of semistable $kQ$-modules}

In this section, we describe the slopes of semistable $kQ$-modules
in case $Q$ is a tame quiver. The main result is stated in Theorem
\ref{main theorem}. We keep all the notations in the previous
section. In particular, $Q=(I=Q_0,Q_1)$ denotes an acyclic quiver
and $A=kQ$ is the path algebra of $Q$ over an algebraically closed
field.

We denote by $\mod ^{ss}A$ the full subcategory of $\mod A$
consisting of semistable $A$-modules. Hence,
$\mod^{ss}A=\displaystyle\cup_{a\in \mathbb{Q}} \mod ^aA$. For a weight $\theta=(\theta_i)_{i\in I}$ for $A$,
define
$${\scr X}_\theta=\{ a\in {\mathbb{Q}} \mid \mod^aA \;\text{ is non-zero} \}.$$
 Our main aim in this section is to describe the set ${\scr X}_\theta$ in case $Q$ is a tame
quiver. First of all, we have the following facts in some special cases.

\begin{prop} {\rm (1)} $|{\scr X}_\theta|=1$ if and only if all $A$-modules are
semistable, i.e. $\mod ^{ss}A=\mod A$. In other words, all $\theta_i$, $i\in I$,
coincide.

{\rm (2)} $|{\scr X}_\theta|=2$ if and only if $Q$ contains two full
subquivers $Q'=(Q'_0,Q'_1)$ and $Q''=(Q_0'',Q_1'')$ such that
$Q_0=Q_0'\cup Q_0''$  and there are no arrows from $Q_0''$ to $Q_0'$
and $\theta_{i_1}=\theta_{j_1}<\theta_{i_2}=\theta_{j_2}$ for all
$i_1,j_1\in Q_0',i_2,j_2\in Q_0''$.
\end{prop}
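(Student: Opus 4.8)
The plan is to base everything on one elementary observation: since $f_i=1$ for every $i$, each simple module $S_i$ is stable with $\mu(S_i)=\theta_i$, so $\{\theta_i:i\in I\}\subseteq \mathscr{X}_\theta$; moreover, by the remark following Lemma \ref{ses}, a direct sum $S_i\oplus S_j$ is semistable exactly when $\theta_i=\theta_j$. With these in hand, part (1) falls out immediately. If all $\theta_i$ equal a common value $c$, then $\mu\equiv c$, so $\mu(U)=c=\mu(M)$ for every submodule $U\subseteq M$; hence $\mod^{ss}A=\mod A$ and $\mathscr{X}_\theta=\{c\}$. Conversely, $|\mathscr{X}_\theta|=1$ together with $\{\theta_i\}\subseteq\mathscr{X}_\theta$ forces all $\theta_i$ to coincide; and if some $\theta_i\neq\theta_j$, then $S_i\oplus S_j$ is a non-semistable module, so $\mod^{ss}A\neq\mod A$. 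This will establish the equivalence of the three conditions in (1).

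For the ``if'' direction of (2) I would write $c_1<c_2$ for the two weight values and $Q_0',Q_0''$ for the corresponding vertex sets, so that $\theta\equiv c_1$ on $Q_0'$ and $\theta\equiv c_2$ on $Q_0''$, with no arrows from $Q_0''$ to $Q_0'$. The structural point I will use is that, for any $M\in\mod A$, the subspaces $M_i$ with $i\in Q_0''$ assemble into a subrepresentation $M''$ of $M$: the only arrow-maps that could fail to preserve these subspaces are those with tail in $Q_0''$ and head in $Q_0'$, and by hypothesis there are none. The quotient $M'=M/M''$ is then supported on $Q_0'$, and any nonzero module supported entirely on $Q_0''$ (resp. $Q_0'$) has slope $c_2$ (resp. $c_1$). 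Now if $M$ is semistable of slope $a$ with both $M''$ and $M'$ nonzero, then $M''$ being a proper submodule gives $c_2=\mu(M'')\le a$, while $M'$ being a proper quotient gives, via Lemma \ref{equivalent definition}, $a\le\mu(M')=c_1$; since $c_1<c_2$ this is impossible. Hence every semistable module is supported entirely on $Q_0'$ or on $Q_0''$, so its slope is $c_1$ or $c_2$; as both are realized by simples, I conclude $\mathscr{X}_\theta=\{c_1,c_2\}$.

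For the ``only if'' direction of (2), assuming $|\mathscr{X}_\theta|=2$, part (1) shows that $\{\theta_i\}\subseteq\mathscr{X}_\theta$ is not a singleton and hence consists of exactly two values $c_1<c_2$. I set $Q_0'=\{i:\theta_i=c_1\}$, $Q_0''=\{i:\theta_i=c_2\}$ and take the associated full subquivers, which partition $Q_0$ and carry the required constant weights; the remaining task is to exclude arrows from $Q_0''$ to $Q_0'$. If $\alpha:i\to j$ were such an arrow, I would consider the representation $M$ with $M_i=M_j=k$, the identity along $\alpha$, and zero elsewhere. Its unique proper nonzero subrepresentation is $S_j$ with $\mu(S_j)=c_1<\tfrac{1}{2}(c_1+c_2)=\mu(M)$, so $M$ is stable, hence semistable, of slope $\tfrac{1}{2}(c_1+c_2)\notin\{c_1,c_2\}$. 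This produces a third element of $\mathscr{X}_\theta$, contradicting $|\mathscr{X}_\theta|=2$; therefore no such arrow exists.

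I expect the only genuine subtlety to lie in the ``if'' direction of (2): one must fix the arrow convention so that the $Q_0''$-supported subspaces really form a submodule (rather than a quotient), and then pair the submodule inequality $\mu(M'')\le\mu(M)$ with the quotient inequality $\mu(M)\le\mu(M')$ correctly so that the two pinch to a contradiction. Everything else reduces to the slope inequalities of Lemma \ref{ses} and the computation of $\mu$ on simple modules; in particular, tameness of $Q$ plays no role in this proposition.
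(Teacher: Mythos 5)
Your proof is correct and follows essentially the same route as the paper: simples realize the weights as slopes, the two-dimensional indecomposable supported on a bad arrow produces the intermediate slope $(c_1+c_2)/2$, and the converse of (2) rests on semistable modules being supported on one side only. The only difference is that where the paper dismisses that converse in one line (``each semistable $A$-module is either a $kQ'$-module or a $kQ''$-module''), you supply the underlying argument --- the $Q_0''$-supported subspaces form a submodule precisely because no arrows go from $Q_0''$ to $Q_0'$, and the submodule/quotient slope inequalities then pinch --- which is exactly the justification the paper leaves implicit.
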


\begin{proof} {\rm (1)} Since each simple module $S_i$ is semistable, it follows that $\theta _i=\mu(S_i)\in
{\scr X}_\theta$. Thus, if $|{\scr X}_\theta|=1$, then
$\theta_i=\theta_j$ for all $i\neq j\in Q_0$. This implies that
 the slopes of all $A$-modules are equal. Therefore, all $A$-modules are semistable.

Conversely, assume that all $A$-modules are semistable. Suppose
$|{\scr X}_\theta|>1$, i.e., there are $i,j\in Q_0$ such that
$\theta _i\neq \theta_j$. Without loss of generality, we assume
$\theta _i< \theta_j$. This implies that the semisimple module
$S_i\oplus S_j$ is not semistable. This is a contradiction. Hence,
$|{\scr X}_\theta|=1$.

{\rm (2)} Suppose $|{\scr X}_\theta|=2$, say ${\scr X}_\theta=\{a,b\}$ with
$a<b$. Set
$$Q_0'=\{i\in I\mid \theta_i=a\}\;\text{ and }\;Q_0''=\{i\in I\mid \theta_i=b\}.$$
 Let $Q'$ and $Q''$ be the full subquivers of $Q$ with vertex sets $Q_0'$ and $Q_0''$,
respectively. Then $Q_0$ is the disjoint union of $Q'_0$ and
$Q''_0$. Suppose there is an arrow $j\longrightarrow i$ with $i\in
Q_0'$ and $j\in Q_0''$. Consider the indecomposable $A$-module $M$
with socle $S_i$ and $M/S_i\cong S_j$. Then $M$ is semistable and
$\mu(\bfdim M)=(a+b)/2\in{\scr X}_\theta$. This contradicts the
assumption ${\scr X}_\theta=\{a,b\}$ since $a<(a+b)/2<b$. Therefore, there are no arrows
from $Q_0''$ to $Q_0'$.

The converse follows from the fact that each semistable
$A$-module is either a $kQ'$-module or a $kQ''$-module.
\end{proof}

\medskip
From now onwards, we assume that $Q$ is a (connected) tame quiver
which is obtained from a (connected) Dynkin quiver $\Gamma$ of type
$A,D,E$ by adding a vertex. This gives symmetric Cartan matrices
$C_{Q}$ and $C_\Gamma$ of $Q$ and $\Gamma$, respectively. Thus, we
have the associated Kac--Moody Lie algebras ${\frak g}(C_{Q})$ and
${\frak g}(C_\Gamma)$. Let $\Delta _0$ and $\Delta_0^+$ be the set
of real roots and the set of positive real roots of ${\frak
g}(C_\Gamma)$, respectively. By \cite{K}, the set of positive real
roots of ${\frak g}(C_{Q})$ can be described as
$$\Delta _+^{\rm re}=\{\alpha+n\delta \mid \alpha\in \Delta
_0,n\geq1\}\cup \Delta_0^+,$$
 and its set of imaginary roots is $\Delta^{\rm im}={\mathbb Z}\delta\backslash\{0\}$, where $\delta$
 denotes the minimal positive imaginary root of $Q$.

 Let $\cal P$ be the preprojective component of the Auslander--Reiten quiver
 of $A=kQ$. By \cite{R}, the dimension vectors of $P\in\cal P$ are positive
 real roots of ${\frak g}(C_{Q})$. Let $P_1,P_2,\ldots,P_N$ be all the indecomposable
preprojective $A$-modules, up to isomorphism, with $\bfdim
P_i=\alpha_i<\delta$. For each $P\in \mathcal{P}$, $\bfdim
P=\alpha+n\delta$, with $\alpha\in\Delta^{\text{re}}_{+}$ and
$\alpha<\delta$. Then $0>\partial(P)=\langle\delta,
\alpha+n\delta\rangle=\langle\delta, \alpha\rangle$. Thus, the
indecomposable module $X$ with $\bfdim X=\alpha$ is preprojective.
Hence, $X\cong P_i$ and $\alpha=\alpha _i$ for some $1\leq i\leq N$,
i.e., $\bfdim P=\alpha _i+n\delta$.

The following fact is well known.

\begin{lem}\label{existence of homomorphism} Let $M\in\mathcal{P}$.
 Then there exists $m\gg 0$ such that for each projective module $P$,
 $\Hom_A(M,\tau^{-n} P)\neq0$ whenever $n\geq m$.
\end{lem}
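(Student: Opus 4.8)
The plan is to reduce the statement to the case of an indecomposable projective and then to read off the non-vanishing of the Hom-space from the Euler form, exploiting that the $\delta$-coefficient of $\bfdim\tau^{-n}P$ grows without bound. Since there are only finitely many indecomposable projectives $P_{(1)},\dots,P_{(\ell)}$ (one for each vertex) and $\tau^{-n}$ is additive, it suffices to produce, for each indecomposable projective $P_{(j)}$, a bound $m_j$ with $\Hom_A(M,\tau^{-n}P_{(j)})\neq 0$ for all $n\geq m_j$; taking $m=\max_j m_j$ then settles the general case, because every nonzero projective module has some $P_{(j)}$ as a direct summand. So I fix an indecomposable projective $P$, write $\bfdim M=v$, and, following the description recalled above, $\bfdim\tau^{-n}P=\alpha^{(n)}+q_n\delta$, where each $\alpha^{(n)}$ lies in the finite set $\{\alpha_1,\dots,\alpha_N\}$ of positive real roots below $\delta$ and $q_n\geq 0$.

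The key observation is that I need not compute $\Hom_A(M,\tau^{-n}P)$ exactly, as a lower bound suffices. Directly from the definition of the Euler form,
$$\dim_k\Hom_A(M,\tau^{-n}P)\geq \langle v,\bfdim\tau^{-n}P\rangle=\langle v,\alpha^{(n)}\rangle+q_n\langle v,\delta\rangle,$$
where I have used $\langle\delta,\delta\rangle=0$. Because $\delta$ spans the radical of the symmetrized Euler form in tame type, one has $\langle v,\delta\rangle=-\langle\delta,v\rangle=-\partial(M)$, and since $M$ is preprojective its defect $\partial(M)$ is negative, whence $\langle v,\delta\rangle>0$. The term $\langle v,\alpha^{(n)}\rangle$ takes only finitely many values as $n$ varies and is therefore bounded. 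Thus the right-hand side tends to $+\infty$ as soon as $q_n\to\infty$, and in particular becomes strictly positive for all large $n$, forcing $\Hom_A(M,\tau^{-n}P)\neq 0$.

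The one point that requires care — and the main obstacle — is the claim that $q_n\to\infty$. This is where the tame hypothesis genuinely enters: on dimension vectors $\tau^{-1}$ acts as the inverse Coxeter transformation $\Phi^{-1}$, and for an affine type the characteristic polynomial of $\Phi$ has $(\lambda-1)^2$ as a factor, producing a rank-two Jordan block at the eigenvalue $1$ that contains $\delta$. Consequently the $\delta$-coefficient of $\Phi^{-n}v$ grows linearly in $n$ with leading term proportional to $\partial(P)$, while the contribution of the remaining eigenvalues (roots of unity coming from the finite-type part) stays bounded; I will make this precise by invoking the structure of the Coxeter transformation from \cite{K} and \cite{DR}. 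Equivalently, it is well known that $\dim_k\tau^{-n}P\to\infty$, and since $\dim_k\alpha^{(n)}$ is bounded while $\dim_k\delta>0$, this already yields $q_n\to\infty$. Once this growth is established, the displayed inequality completes the proof.
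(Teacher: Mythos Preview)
Your argument is correct, but there is nothing to compare it against: the paper does not prove this lemma. It is stated with the preamble ``The following fact is well known'' and left without proof. Your Euler-form estimate
\[
\dim_k\Hom_A(M,\tau^{-n}P)\geq\langle\bfdim M,\bfdim\tau^{-n}P\rangle
=\langle v,\alpha^{(n)}\rangle+q_n\langle v,\delta\rangle,
\]
together with $\langle v,\delta\rangle=-\partial(M)>0$ and the boundedness of $\langle v,\alpha^{(n)}\rangle$, is a clean and valid justification of what the paper simply asserts. The decomposition $\bfdim\tau^{-n}P=\alpha^{(n)}+q_n\delta$ with $\alpha^{(n)}\in\{\alpha_1,\dots,\alpha_N\}$ is exactly what the paper establishes in the paragraph immediately preceding the lemma, so you are on solid ground invoking it.

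Two minor remarks. First, the parenthetical ``where I have used $\langle\delta,\delta\rangle=0$'' is superfluous: the displayed identity is just bilinearity of the Euler form, and $v=\bfdim M$ need not involve $\delta$ at all. Second, your fallback argument for $q_n\to\infty$ (that $\dim_k\tau^{-n}P\to\infty$ while $\dim_k\alpha^{(n)}$ is bounded) is already enough and avoids the Jordan-block discussion; you could also argue directly that the $\tau^{-n}P$ are pairwise non-isomorphic preprojectives, hence have pairwise distinct dimension vectors, so the pairs $(\alpha^{(n)},q_n)$ are all distinct and $q_n$ cannot remain bounded.
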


\begin{lem}\label{maximal slope} Suppose $M\in\mathcal{P}$ is
semistable satisfying $\mu(P_i)\leq\mu(M)$ for all $1\leq i\leq N$ and
$\mu(\delta)<\mu(M)$. Then there are only finitely many semistable
modules in $\mathcal{P}$.
\end{lem}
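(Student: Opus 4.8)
The plan is to show that the preprojective indecomposables $P$ with $\bfdim P = \alpha_i + n\delta$ have slopes $\mu(P)$ that, for large $n$, lie on one side of $\mu(M)$ and eventually force a failure of semistability. The key quantitative input is that the slope of $P$ with dimension vector $\alpha_i + n\delta$ converges to $\mu(\delta)$ as $n\to\infty$, since $\mu(\alpha_i+n\delta) = \frac{\theta(\alpha_i)+n\theta(\delta)}{|\alpha_i|+n|\delta|} \to \frac{\theta(\delta)}{|\delta|} = \mu(\delta)$ (writing $|d| = \sum_i d_i f_i$). Because we are given $\mu(\delta) < \mu(M)$, there is some bound $n_0$ so that every preprojective $P$ with $\bfdim P = \alpha_i + n\delta$ and $n \geq n_0$ satisfies $\mu(P) < \mu(M)$.

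**Next I would** use Lemma~\ref{existence of homomorphism} to manufacture a submodule (or a nonzero homomorphism) that violates the semistability of any such large-$n$ preprojective module, thereby excluding it from $\mathcal{P}\cap\mod^{a}A$. Concretely, $M$ is semistable of slope $\mu(M) =: a$, and by that lemma there is $m\gg0$ with $\Hom_A(M,\tau^{-n}P)\neq 0$ for all projectives $P$ and all $n\geq m$. A nonzero map from $M$ into a preprojective $N$ whose slope is strictly less than $\mu(M)$ contradicts the semistability of $N$ via Lemma~\ref{equivalent definition}: the image of $M$ in $N$ is a submodule with slope $\geq a > \mu(N)$ (after checking the image has slope at least $\mu(M)$, which follows because $M$ is semistable so every quotient of $M$ has slope $\geq a$). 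Hence no preprojective module $N$ of slope strictly below $a$ that receives a nonzero map from $M$ can be semistable.

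**The main obstacle** is to control precisely which preprojectives remain candidates and to package this into a finiteness statement. For each fixed $i\in\{1,\dots,N\}$, the slopes $\mu(\alpha_i+n\delta)$ form a monotone sequence in $n$ converging to $\mu(\delta)<a$; so for each $i$ only finitely many $n$ give slope exactly equal to $a$ (indeed at most one value of $n$ can satisfy the slope equation $\mu(\alpha_i + n\delta)=a$ for fixed $i$, since $\mu(\alpha_i+n\delta)=a$ is linear in $n$ after clearing denominators and has a unique solution unless $\mu(\delta)=a$). Since semistable modules in $\mathcal{P}$ all have slope $a=\mu(M)$, and there are only $N$ choices of $i$, the preprojective semistable modules are confined to finitely many dimension vectors $\alpha_i + n\delta$. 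As each dimension vector corresponds to a single indecomposable in $\mathcal{P}$, this yields finiteness.

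**To summarize the order of steps:** first establish $\mu(\alpha_i+n\delta)\to\mu(\delta)$ and solve the linear equation $\mu(\alpha_i+n\delta)=a$ to see it has at most one solution $n$ per index $i$ (using $\mu(\delta)<a$); then invoke Lemma~\ref{existence of homomorphism} together with Lemma~\ref{equivalent definition} to rule out the preprojectives of slope $<a$ via a destabilizing image, confirming that any semistable $P\in\mathcal{P}$ must have slope exactly $a$; finally, combine the bound $N$ on the number of indices with the at-most-one-$n$ conclusion to count at most $N$ semistable preprojectives. The delicate point I expect to wrestle with is the interplay between the two regimes—ruling out the infinitely many $P$ with slope approaching $\mu(\delta)$ from below via the Hom-nonvanishing, while separately arguing that slope-$a$ solutions are finite—so that no infinite family slips through.
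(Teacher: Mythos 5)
The engine of your proof---Lemma \ref{existence of homomorphism} produces, for all $n\geq m$, nonzero maps $M\to\tau^{-n}P$; the slopes of those targets are eventually strictly below $\mu(M)$; and a nonzero map from a semistable module to a module of strictly smaller slope destroys semistability of the target (your image argument via Lemma \ref{equivalent definition}, which is exactly the Hom-vanishing in Lemma \ref{prop-of-sub})---is precisely the paper's proof, and that part is correct. (For the slope estimate you could also use the hypothesis $\mu(P_i)\leq\mu(M)$ directly: for $n'\geq 1$, $\mu(\alpha_i+n'\delta)$ is a weighted mediant of $\mu(\alpha_i)\leq\mu(M)$ and $\mu(\delta)<\mu(M)$, hence $<\mu(M)$; your limiting argument works as well.)

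Your final assembly, however, contains a false step. You assert that ``any semistable $P\in\mathcal{P}$ must have slope exactly $a$'' and conclude there are ``at most $N$'' semistable preprojectives by solving $\mu(\alpha_i+n\delta)=a$. Both claims are wrong: Lemma \ref{existence of homomorphism} guarantees nonzero maps $M\to\tau^{-n}P$ only for $n\geq m$, so nothing rules out semistable preprojectives of slope $<a$ among the finitely many small shifts $\tau^{-n}P$ with $n<m$---and such modules genuinely exist. For instance, any simple projective module is stable (it has no proper nonzero submodules), hence semistable, whatever its slope $\theta_i$ happens to be; so the count ``at most $N$, all of slope $a$'' cannot hold, and the thread about the equation $\mu(\alpha_i+n\delta)=a$ is both insufficient and unnecessary. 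The repair is immediate and is what the paper does: your Hom argument excludes every $\tau^{-n}P$ with $n\geq\max(m,n_0)$, and since there are only finitely many indecomposable projectives, the remaining candidates form a finite set---which is all the lemma asserts. Delete the slope-$a$ claim and end with that observation instead.
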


\begin{proof}  By Lemma \ref{existence of homomorphism}, there is $m\gg0$ such that for each projective module $P$
and $m\geq n$,  $\Hom_A(M,\tau^{-m} P)\neq0$. We can assume that $\bfdim \tau^{-m} P >\delta$.
This implies that $\mu(M)>\mu(\tau^{-m} P)$. By Lemma \ref{prop-of-sub}, $\tau^{-m} P$ is not
semistable. Therefore, there are only finitely many semistable modules in
$\mathcal{P}$.
\end{proof}

Now we recall some facts about the regular $A$-modules from
\cite{CB1,CB2,R}. Let $\cal T$ be a tube of rank $r$ in the
Auslander--Reiten quiver of $A$. Let $E_1,E_2,\ldots,E_r$ be the
quasi-simple modules in $\cal T$ with $\tau(E_i)=E_{i+1},1\leq i\leq r$, where
$E_{r+1}=E_1$. Let $E_{i,j}$ denote the indecomposable module in $\cal T$
with quasi-length $j$ and quasi-socle $E_i$. It is known that
$E_{i,j}$ is regular uniserial with regular composition factors of the form
$E_i,\tau^{-1} E_i,\ldots,\tau^{-(r-1)}E_i$ and
$\textbf{dim}E_{i,j}=\bfdim E_{i,j_0}+n\delta$, where $j=j_0+nr$ for
some $0\leq j_0< r$ and $n\geq 0$. We have the following known fact.

\begin{prop}\label{r3} Let $1\leq i\leq r$ and $m\geq 1$. Then for any $1\leq s\leq r$ and $m\delta\leq j< (m+1)\delta$,
$$\Hom_A(E_{i,m\delta},E_{s,j})\neq0\;\text{ and }\;\Hom_A(E_{s,j},E_{i,(m+1)\delta})\neq0.$$
\end{prop}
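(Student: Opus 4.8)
The plan is to exploit the regular uniserial structure of the modules $E_{i,j}$ inside the tube $\mathcal{T}$ of rank $r$. Recall that $E_{i,m\delta}$ is the uniserial module of quasi-length $mr$ with quasi-socle $E_i$, and its regular composition factors are precisely $E_i,\tau^{-1}E_i,\ldots,\tau^{-(r-1)}E_i$, each appearing $m$ times. For the first nonvanishing statement $\Hom_A(E_{i,m\delta},E_{s,j})\neq 0$, I would look for a nonzero quotient of $E_{i,m\delta}$ that embeds as the quasi-socle layer of $E_{s,j}$. Since $\bfdim E_{s,j}\geq m\delta$, the module $E_{s,j}$ is ``tall enough'' to contain, as a regular submodule, an isomorphic copy of a suitable quasi-simple quotient of $E_{i,m\delta}$; concretely, $E_{i,m\delta}$ has $\tau^{-(r-1)}E_i$ (or the appropriate shift landing on the quasi-socle of $E_{s,j}$) as a quotient, and that quasi-simple maps into $E_{s,j}$ because $E_{s,j}$ has quasi-socle $E_s$ and contains every $\tau^{-t}E_s$ among its factors. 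The cleanest route is to match indices: the quasi-socle $E_s$ of $E_{s,j}$ appears as one of the composition factors $\tau^{-t}E_i$ of $E_{i,m\delta}$ (because all of $E_1,\ldots,E_r$ occur, the whole $\tau$-orbit being covered once $m\geq 1$), and the corresponding quotient map of $E_{i,m\delta}$ onto that quasi-simple followed by its inclusion into $E_{s,j}$ is nonzero.

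For the second statement $\Hom_A(E_{s,j},E_{i,(m+1)\delta})\neq 0$, I would argue dually. Here $E_{i,(m+1)\delta}$ has quasi-length $(m+1)r$ and is large enough (its dimension vector is $(m+1)\delta > j$, using $j<(m+1)\delta$) to contain $E_{s,j}$ either as a submodule or as a quotient along the uniserial filtration. The key point is that in a tube, the uniserial modules form a lattice under the $\Hom$-poset: given any quasi-simple $E_s$ occurring in the composition series of $E_{i,(m+1)\delta}$, there is a regular submodule of $E_{i,(m+1)\delta}$ with quasi-socle $E_s$ and quasi-length at least $j$, and projecting $E_{s,j}$ onto its appropriate top (or including it into that submodule) yields a nonzero homomorphism. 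Since $j \le (m+1)r\cdot(\text{something})$ — more precisely since $j<(m+1)\delta$ forces the quasi-length of $E_{s,j}$ to be strictly less than $(m+1)r$ once we account for the $\delta$-periodicity — the uniserial $E_{s,j}$ fits inside $E_{i,(m+1)\delta}$, giving the nonzero map.

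The main obstacle I anticipate is the bookkeeping of the cyclic indices modulo $r$: I must verify that the quasi-socle index $s$ of the target (or source) actually occurs at the right height in the uniserial filtration, and that the strict inequality $j<(m+1)\delta$ (equivalently, the quasi-length being strictly below $(m+1)r$) is exactly what guarantees room for a nonzero map in the second case while the weak inequality $m\delta\leq j$ suffices in the first. I would handle this by writing $j=j_0+nr$ with $0\le j_0<r$ and $\textbf{dim}\,E_{s,j}=\textbf{dim}\,E_{s,j_0}+n\delta$, then translating the $\Hom$-computation into the standard one for quasi-simple and short uniserial modules, where the relevant $\Hom$-spaces between indecomposables in a tube are already known (each is at most one-dimensional and its vanishing is governed by the relative positions of quasi-socles and quasi-lengths). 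Reducing the general case to these known small cases via the $\delta$-translation is the step that requires care but should go through cleanly.
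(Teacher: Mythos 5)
Your overall strategy---producing a nonzero map by factoring through a module that is simultaneously a regular quotient of the source and a regular submodule of the target---is the right one (the paper itself gives no proof at all, stating the proposition as a known fact drawn from the cited references on regular modules). But your concrete execution contains a genuine error: you repeatedly treat regular \emph{composition factors} as if they were quotients or submodules. Because $E_{i,m\delta}$ is regular uniserial, its unique quasi-simple regular quotient is its quasi-top $\tau^{-(mr-1)}E_i=\tau E_i$; there is no surjection of $E_{i,m\delta}$ onto an arbitrary composition factor $\tau^{-t}E_i$, so the phrase ``or the appropriate shift landing on the quasi-socle of $E_{s,j}$'' has no content. Dually, the only quasi-simple admitting a nonzero map into $E_{s,j}$ is its quasi-socle $E_s$; the fact that ``$E_{s,j}$ contains every $\tau^{-t}E_s$ among its factors'' produces subquotients, not submodules. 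Hence the composite you build (surjection onto a quasi-simple followed by an inclusion) exists only when $s\equiv i+1 \pmod{r}$, and the first claim is unproved for all other $s$. The second half has the same defect in sharper form: you assert that $E_{i,(m+1)\delta}$ has a regular submodule with quasi-socle $E_s$, which is false for $s\neq i$, since every nonzero regular submodule of the uniserial module $E_{i,(m+1)\delta}$ has quasi-socle $E_i$.

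The repair is to let the common factor have quasi-length up to $r$ rather than forcing it to be quasi-simple. For the first claim, let $Z$ be the regular uniserial module with quasi-socle $E_s$ (the quasi-socle of the target) and quasi-top $\tau E_i$ (the quasi-top of the source); its quasi-length $t_0$ is determined modulo $r$ and can be chosen in $\{1,\dots,r\}$. Since the source has quasi-length $mr\geq r\geq t_0$, $Z$ is a regular quotient of $E_{i,m\delta}$; since the target has quasi-length $j\geq mr\geq r\geq t_0$, $Z=E_{s,t_0}$ is a regular submodule of $E_{s,j}$; the composite $E_{i,m\delta}\twoheadrightarrow Z\hookrightarrow E_{s,j}$ is then nonzero. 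The second claim is handled identically with $Z'$ the uniserial module of quasi-length at most $r$ having quasi-top equal to that of $E_{s,j}$ and quasi-socle $E_i$. Note that the argument uses the hypotheses $m\geq 1$ and $m\delta\leq j<(m+1)\delta$ only through the fact that all quasi-lengths involved are at least $r$. Your closing suggestion of simply quoting the known description of $\Hom$-spaces in a standard stable tube would also settle the matter at once (this is in effect what the paper does), but the explicit maps you actually wrote down do not exist in general.
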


\begin{lem}\label{dimension vector comparison} Let $P\in\mathcal{P}$. Assume that
$\bfdim P=\alpha_i+n\delta$ and $M$ is a submodule of P with $\bfdim
M=\alpha_j+n'\delta$, where $1\leq i,j\leq N$. Then $n'\leq n$.
\end{lem}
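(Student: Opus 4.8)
The plan is to exploit the defect function $\partial$ together with the structure of the preprojective component $\mathcal{P}$ to control the coefficient of $\delta$ appearing in the dimension vector of a submodule. Recall from the preceding discussion that every indecomposable preprojective module $P$ has dimension vector of the form $\alpha_i + n\delta$ with $\alpha_i < \delta$ a fixed positive real root and $n \geq 0$, and that $0 > \partial(P) = \langle \delta, \alpha_i \rangle$. First I would compute the defect of $P$ and of $M$ directly: $\partial(P) = \langle \delta, \alpha_i + n\delta\rangle = \langle \delta, \alpha_i\rangle$ since $\delta$ is an imaginary root and $\langle \delta, \delta \rangle = 0$, and likewise $\partial(M) = \langle \delta, \alpha_j\rangle$. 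So the defect sees only the ``real part'' $\alpha_i,\alpha_j$ and is insensitive to $n,n'$; this means the defect alone cannot pin down $n'$, and the comparison $n' \leq n$ must come from somewhere else.

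The key idea is therefore to use the embedding $M \hookrightarrow P$ to produce an exact sequence
$$0 \longrightarrow M \longrightarrow P \longrightarrow P/M \longrightarrow 0,$$
and to extract information about $n'$ from the cokernel $P/M$. Since $\mathcal{P}$ is a preprojective component and $\Hom_A(\mathcal{I}, \mathcal{P}) = 0$ and $\Hom_A(\mathcal{R},\mathcal{P})=0$ by Lemma~1.1(1), any submodule $M$ of the preprojective module $P$ must itself be preprojective: a nonzero map into $P$ cannot originate from a regular or preinjective summand. Hence $M$ is a direct sum of preprojectives, and its dimension vector $\alpha_j + n'\delta$ has $n' \geq 0$. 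Next I would argue that the cokernel $P/M$ has non-negative defect contributions in the appropriate sense, so that comparing the $\delta$-coefficients across the short exact sequence forces the desired inequality. Concretely, dimension vectors are additive, so $\bfdim(P/M) = (\alpha_i - \alpha_j) + (n - n')\delta$, and the heart of the matter is to show that the ``$\delta$-coefficient'' $n - n'$ of this genuine module $P/M$ cannot be negative.

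The main obstacle, and the step requiring the most care, is showing $n - n' \geq 0$, i.e. that a submodule cannot acquire a strictly larger $\delta$-multiplicity than its ambient preprojective module. I expect the cleanest route is to apply $\Hom_A(-, \tau^{-\ell}P')$ for a suitable projective $P'$ and large $\ell$: by Lemma~\ref{existence of homomorphism} there is $\ell \gg 0$ with $\Hom_A(P, \tau^{-\ell}P') \neq 0$, and the growth of $\dim \Hom_A(-, \tau^{-\ell}P')$ along the $\tau^{-1}$-orbit is governed, via the Euler form and the defect, by exactly the coefficient of $\delta$. Comparing this growth for $M$ versus $P$ through the inclusion $M \hookrightarrow P$ should yield $n' \leq n$. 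An alternative, possibly more elementary, argument uses that $P/M$ is a genuine (possibly decomposable, possibly regular or preinjective) module whose total dimension $\sum_i (\bfdim P/M)_i = \sum_i(\alpha_i - \alpha_j)_i + (n-n')\sum_i \delta_i$ must be non-negative; combined with control on $\alpha_i - \alpha_j$ coming from $\alpha_i, \alpha_j < \delta$, this pins down $n - n' \geq 0$. I would develop whichever of these makes the inequality $n' \leq n$ fall out most directly, as this comparison of $\delta$-multiplicities is precisely where the real content of the lemma lies.
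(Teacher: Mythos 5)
Your second, ``elementary'' route is essentially the paper's own proof, and it does work; the paper just runs it more directly. Since $M\subseteq P$, the dimension vectors already satisfy $\bfdim M\leq \bfdim P$ componentwise, so assuming $n'>n$ gives $(n'-n)\delta\leq \alpha_i-\alpha_j$; as $(n'-n)\delta\geq\delta$ is positive and sincere while $\alpha_j\geq 0$, this forces $\delta\leq\alpha_i$, contradicting $\alpha_i<\delta$. Note what this dispenses with: there is no need to pass to the cokernel $P/M$, no need to know that $M$ is preprojective (your Hom-vanishing observation is correct but unused), and no need for the opening defect computation. Your total-dimension variant also closes in one line: if $n'>n$ then $\sum_k(\bfdim P/M)_k\leq \sum_k(\alpha_i)_k-\sum_k(\alpha_j)_k-\sum_k\delta_k\leq -2<0$, impossible for a genuine module. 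The only real shortcoming of your write-up is that you stop at a fork instead of finishing: the homological branch via $\Hom_A(-,\tau^{-\ell}P')$ is a needless detour (and, as your own defect computation shows, defect-type invariants are blind to $n$ and $n'$, which is exactly why growth-rate arguments are awkward here), while the elementary branch is a single line of arithmetic from completion. Commit to the elementary branch and sharpen ``total dimension nonnegative'' to the componentwise inequality $\bfdim M\leq\bfdim P$; that is precisely the paper's three-line proof.
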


\begin{proof} Suppose $n'> n$. Since $\textbf{dim}M=\alpha_j+n'\delta <
\bfdim P=\alpha_i+n\delta$, it follows that $(n'-n)\delta \leq
\alpha_i - \alpha_j$. But $(n'-n)\delta$ is positive and sincere.
This is impossible. Therefore, $n'\leq n$.
\end{proof}

Now we state our main theorem.

\begin{thm}\label{main theorem} {\rm (1)} If $\mod^{\mu(\delta)}A$ is equivalent to $\mod kQ'$,
where $Q'$ is a Dynkin or tame quiver, then ${\scr X}_\theta$ is a
finite set.

{\rm (2)} If $\mod^{\mu(\delta)}A$ is equivalent to $\mathcal{R'}$,
where $\mathcal{R'}$ is the regular part of $\mod kQ'$ for a
(possibly disconnected) tame quiver $Q'$, then ${\scr X}_\theta$ is
an infinite set.
\end{thm}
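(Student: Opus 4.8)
The plan is to analyze the two cases separately, exploiting the dichotomy in Theorem~\ref{subcat-equiv} together with the slope-function machinery developed above. The key observation is that $\scr X_\theta$ is finite precisely when the semistable modules cannot ``escape to infinity'' along the preprojective or preinjective components, and the behaviour at the single critical slope $\mu(\delta)$ controls everything.

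\textbf{Case (1): finiteness of $\scr X_\theta$.} First I would show that only finitely many slopes arise from the regular part $\mathcal R$. Each tube has finitely many quasi-simple modules, and by Proposition~\ref{r3} together with Lemma~\ref{ses}, once a tube contains a semistable module of dimension $\geq \delta$ the slope is forced to equal $\mu(\delta)$; since by hypothesis $\mod^{\mu(\delta)}A$ is all of $\mod kQ'$ for a Dynkin or tame $Q'$, this single slope is already accounted for, and the remaining regular semistable modules have dimension vectors bounded by $\delta$ (of which there are finitely many), contributing only finitely many slopes. The harder part is bounding the slopes coming from $\mathcal P$ and $\mathcal I$. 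Here I would use Lemma~\ref{maximal slope}: the hypothesis in Case~(1) should be arranged to guarantee the existence of a semistable $M\in\mathcal P$ realizing a maximal slope, so that Lemma~\ref{maximal slope} applies and forces only finitely many semistable preprojectives. I would establish the existence of such a slope-maximizing $M$ by observing that the candidate slopes from the finitely many ``bases'' $P_i$ (with $\bfdim P_i=\alpha_i<\delta$) are finite in number, and any semistable $P$ with $\bfdim P=\alpha_i+n\delta$ has $\mu(P)\to\mu(\delta)$ as $n\to\infty$, so the supremum of slopes exceeding $\mu(\delta)$ is attained. A dual argument via the preinjective component $\mathcal I$ handles slopes below $\mu(\delta)$. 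Combining the three components yields $|\scr X_\theta|<\infty$.

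\textbf{Case (2): infinitude of $\scr X_\theta$.} Now suppose $\mod^{\mu(\delta)}A\cong\mathcal R'$, the regular part of a (possibly disconnected) tame quiver $Q'$. By Corollary~\ref{classification of slope}(4), $\mod^{\mu(\delta)}A$ contains an indecomposable module $E$ with $\bfdim E=\delta$, lying in some tube $\mathcal T$ of rank $r$. The strategy is to produce an infinite strictly monotone sequence of slopes. Since $Q'$ is tame, I would exhibit, for each $n$, a semistable regular module whose slope lies strictly between $\mu(\delta)$ and some fixed larger value, with these slopes pairwise distinct; concretely, I expect to use the quasi-simple modules $E_1,\dots,E_r$ and the indecomposables $E_{i,j}$, whose dimension vectors $\bfdim E_{i,j_0}+n\delta$ interpolate finitely many ``residues'' shifted by multiples of $\delta$. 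The point is that among the semistable regular modules of dimension $<\delta$ (equivalently the quasi-simples and small $E_{i,j}$ not killed by the slope constraint), distinct dimension vectors generically give distinct slopes, and I would argue these slopes are infinite in number by relating them to the positive real roots $\alpha+n\delta$ of $\frak g(C_Q)$, whose slopes $\mu(\alpha+n\delta)$ are mutually distinct for infinitely many $\alpha$.

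\textbf{Main obstacle.} The crux of Case~(2) is guaranteeing that infinitely many \emph{semistable} modules of \emph{pairwise distinct} slopes exist, rather than merely infinitely many indecomposables. The slope function $\mu$ is not injective on dimension vectors, so I must select dimension vectors lying on distinct level sets of $\mu$ and simultaneously verify semistability for each. I anticipate the resolution lies in the structure forced by $\mod^{\mu(\delta)}A\cong\mathcal R'$ being genuinely tame (not Dynkin): the extra tame component of $Q'$ supplies an infinite family of regular $Q'$-modules, which under the equivalence pull back to semistable $A$-modules of slope $\mu(\delta)$, but the \emph{neighbouring} slopes are populated by viewing these modules inside the original $\mod A$ and perturbing the dimension vector away from multiples of $\delta$. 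Making this perturbation argument precise—ensuring both that the perturbed modules remain semistable and that their slopes are distinct and do not accumulate only at $\mu(\delta)$—is where the real work concentrates, and I would devote the bulk of the proof to constructing this explicit infinite slope sequence.
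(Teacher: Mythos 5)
Your treatment of the preprojective component in Case (1) has a genuine gap, and the gap is exposed by part (2) itself: as written, your argument never uses the hypothesis that $\mod^{\mu(\delta)}A$ is a module category. Lemma \ref{maximal slope} requires a semistable $M\in\mathcal{P}$ with $\mu(M)>\mu(\delta)$ and $\mu(M)\geq\mu(P_i)$ for \emph{all} $1\leq i\leq N$; your limiting argument only shows that \emph{if} semistable preprojectives of slope $>\mu(\delta)$ exist, then a slope-maximal one is attained. It does not show that any exist, and when none exist --- which is exactly what happens when $Q'$ is tame, and also under the hypothesis of part (2) --- you have no control over the semistable preprojectives of slope $<\mu(\delta)$, which is precisely where, in part (2), infinitely many slopes accumulate below $\mu(\delta)$. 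Put bluntly: if your Case (1) argument were valid as stated, it would prove ${\scr X}_\theta$ finite under the hypothesis of part (2) as well, a contradiction. The paper closes this gap differently in the two subcases. When $Q'$ is Dynkin, modules of dimension $m\delta$ are not semistable (Corollary \ref{classification of slope}), whence some preprojective has slope $>\mu(\delta)$, hence $\mu(P_i)>\mu(\delta)$ for some $i$; the slope-maximal base module $P_s$ is then shown to be semistable via Lemma \ref{dimension vector comparison}, and only then does Lemma \ref{maximal slope} apply. When $Q'$ is tame, Lemma \ref{maximal slope} is not applicable at all (here $\mod^{\mu(\delta)}A\cap\mathcal{P}$ is infinite), and the paper uses instead a Hom-sandwich argument: by Lemma \ref{existence of homomorphism} and Lemma \ref{prop-of-sub}, any semistable $\tau^{-m}P$ with $m\gg0$ receives and emits nonzero maps from/to modules of slope $\mu(\delta)$, forcing its slope to be exactly $\mu(\delta)$.

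Your Case (2) strategy is aimed at the wrong component: no infinite family of pairwise distinct slopes can ever come from $\mathcal{R}$. In a tube of rank $r$, a semistable module of quasi-length $j\geq r$ contains the submodule $E_{i,r}$ of dimension $\delta$ and admits a quotient of quasi-length $r$, hence also of dimension $\delta$; semistability together with Lemma \ref{equivalent definition} then forces its slope to equal $\mu(\delta)$. Since homogeneous tubes contain only modules of dimension $m\delta$, and the finitely many non-homogeneous tubes contain only finitely many modules of quasi-length $<r$, the semistable regular modules contribute finitely many slopes in part (2) just as in part (1). Consequently the ``perturbation'' you defer to --- which in any case is not a module-theoretic operation and comes with no reason why the perturbed dimension vectors are realized by semistable modules --- cannot be carried out inside $\mathcal{R}$. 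The paper builds the infinite family in $\mathcal{P}$ instead: the slope-maximal base $P_{j_0}$ is semistable by Lemma \ref{dimension vector comparison} and has slope $<\mu(\delta)$, because $\Hom(P_{j_0},M)\neq0$ for the semistable module $M$ with $\bfdim M=\delta$ supplied by Corollary \ref{classification of slope}(4), while $P_{j_0}\notin\mod^{\mu(\delta)}A\cong\mathcal{R}'$; then, inductively, in each layer of modules with dimension vectors $\alpha_i+n\delta$ one picks the module of maximal slope, verifies its semistability again by Lemma \ref{dimension vector comparison}, and obtains a strictly increasing sequence of slopes converging to (but staying below) $\mu(\delta)$, which gives infinitely many elements of ${\scr X}_\theta$. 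Your arithmetic intuition that the values $\mu(\alpha+n\delta)$ are pairwise distinct is exactly right, but the modules realizing them semistably are preprojective, not regular.
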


\begin{proof} (1) Let ${\cal X}_1$ (resp. ${\cal X}_2$) be the set of isomorphism classes of indecomposable semistable $A$-modules
$M$ (resp.\,with $\mu(M)\neq\mu(\delta)$). According to Theorem {\rm \ref{subcat-equiv}(1)}, we need to
consider the following two cases.

\medskip

{\bf Case 1}. $Q'$ is a Dynkin quiver.

We first show that ${\cal X}_1\cap\mathcal{P}$ is finite. Indeed, by
Corollary \ref{classification of slope}, each module in a
homogeneous tube is not semistable. Hence, there exists
$P\in\mathcal{P}$ such that $\mu(P)>\mu(\delta)$. By the discussion
right above Lemma \ref{existence of homomorphism}, we get that
$\mu(\delta)<\mu(P_i)$ for some $1\leq i\leq N$. Choose $1\leq s\leq
N$ satisfying $\mu(P_j)\leq \mu(P_s)$ for any $1\leq j\leq N$. By
Lemma \ref{dimension vector comparison}, $P_s$ is semistable. Hence,
by Lemma \ref{maximal slope}, there are only finitely many
semistable modules in $\mathcal{P}$.

Next we show that ${\cal X}_1\cap\mathcal{R}$ is finite. By Corollary
\ref{classification of slope}, $A$-modules with dimension vector $m\delta$ are
not semistable. So we only need to consider the non-homogeneous
tubes. Let ${\mathcal T}$ be a non-homogeneous tube of rank $r$. As
before, for $1\leq i\leq r$ and $j\geq 1$, let $E_{i,j}$ be the
indecomposable module in ${\mathcal T}$ with quasi-length $j$ and
quasi-socle $E_i$. Then
$$\bfdim E_{i,j}=\bfdim E_{i,j_0}+n\delta\;\text{ with $j=j_0+nr,\,
0\leq j_0< r$, and $n\geq 1$.}$$
If $\mu(E_{i,j_0})=\mu(\delta)$, then there exists a submodule $N$
of $E_{i,\delta}$ satisfying $\mu(N)>\mu(E_{i,\delta})=\mu(E_{i,j})$
since $E_{i,\delta}$ is not semistable. Hence, $E_{i,j}$ is not
semistable because $N$ is also a submodule of $E_{i,j}$.

If $\mu(E_{i,j_0})\neq \mu(\delta)$, say $\mu(E_{i,j_0}) <
\mu(\delta)$, then $\mu(E_{i,j_0}) < \mu(E_{i,j}) <
\mu(\delta)=\mu(E_{i,\delta})$, which implies that $E_{i,j}$ is not
semistable since $E_{i,\delta}$ is a submodule of $E_{i,j}$.

Consequently, each module in ${\mathcal T}$ with quasi-length $\geq
r$ is not semistable. Therefore, there are only finitely many
semistable modules in $\mathcal{R}$.

By Lemma \ref{equivalent definition} and an argument similar to the
proof for the case of $\mathcal{P}$, we get that
${\cal X}_1\cap\mathcal{I}$ is finite.

\medskip

$\textbf{Case 2}$.  $Q'$ is a tame quiver.

We first show that ${\cal X}_2 \cap\mathcal{P}$ is finite. We will prove
that for $m\gg 0$ and each projective module $P$, if
$M=\tau^{-m}P\notin \mod^{\mu(\delta)}A$, then $M$ is
not semistable. In fact, suppose $M$ is semistable. Let $L \in
\mod^{\mu(\delta)}A\cap\mathcal{P}$ satisfy $\Hom(L,M)\neq0$. Since
$kQ'\cong \mod^{\mu(\delta)}A$ has an infinite preprojective
component, $\mod^{\mu(\delta)}A \cap \mathcal{P}$ is infinite. By
Proposition \ref{existence of homomorphism}, there exists $N_1
 \in \mod^{\mu(\delta)}A\cap \mathcal{P}$ such that $\Hom(M,N_1)\neq0$.
 Thus,
$$\mu(\delta)=\mu(L)\leq\mu(M)\leq\mu(N_1)=\mu(\delta),$$
which implies $\mu(M)=\mu(\delta)$ and $M \in \mod^{\mu(\delta)}A$, a
contradiction. Therefore, ${\cal X}_2 \cap\mathcal{P}$ is finite.

Next we show that ${\cal X}_2\cap \mathcal{R}$ is finite. Since $A$-modules
with dimension vector $m\delta$ do not lie in ${\cal X}_2$, we only need to
consider the non-homogenous tubes. Let ${\mathcal T}$ be a
non-homogenous tube of rank $r$. As in Case 1, for $1\leq i\leq r$
and $j\geq 1$, $\textbf{dim}E_{i,j}=\textbf{dim}E_{i,j_0}+n\delta$
with $j=j_0+nr,\, 0\leq j_0< r$, and $n\geq 1$. Suppose that $E_{i,j}
\notin \mod^{\mu(\delta)}A$. By Proposition \ref{r3},
$\Hom(E_{i,m\delta},E_{i,j})\neq0$ and
$\Hom(E_{i,j},E_{i,(m+1)\delta})\neq0$.

If $E_{i,m\delta}$ is not semistable, then $E_{i,j}$ is not
semistable by an argument similar to Case 1.

If  $E_{i,m\delta}$ is semistable, then $E_{i,(m+1)\delta}$ is
semistable and
$$\mu(\delta)=\mu(E_{i,m\delta})\leq\mu(E_{i,j})\leq\mu(E_{i,(m+1)\delta})=\mu(\delta),$$
which implies that $\mu(E_{i,j})=\mu(\delta)$. Since $E_{i,j}
\notin$ $\mod^{\mu(\delta)}A$, $E_{i,j}$ is not semistable.

In conclusion, the modules in ${\mathcal T}$ with quasi-length $\geq
r$ do not belong to ${\cal X}_2$. Hence, there are only finitely many
indecomposable modules in ${\cal X}_2\cap\mathcal{R}$.

Similarly, we get that ${\cal X}_2 \cap \mathcal{I}$ is finite.

\medskip

(2) We will construct a family of semistable modules $\{ P_{j_i}^{i}
\}_{i\in \mathbb{N}} \in \mathcal{P}$ satisfying
$$\mu( P_{j_i}^{i}) <\mu( P_{j_{i+1}}^{i+1}) < \mu(\delta).$$
 By Corollary \ref{classification of slope}, there exists an indecomposable
semistable module $M$ with $\bfdim M=\delta$. Choose $1\leq j_0\leq
N$ satisfying $\mu(P_j)\leq \mu(P_{j_0})$ for all $1\leq j\leq N$.
Write $P_{j_0}^0$ for $P_{j_0}.$ By Lemma \ref{dimension vector
comparison}, $P_{j_0}^{0}$ is semistable. Since
$\Hom(P_{j_0}^{0},M)\neq 0$, we have $\mu(P_{j_0}^{0})\leq \mu(M)=
\mu(\delta)$. Since $P_{j_0}^{0} \notin \mod^{\mu(\delta)}A$,
$\mu(P_{j_0}^{0})< \mu(M)= \mu(\delta)$. Let
$P_{1}^{1},P_{2}^{1},\ldots,P_{N}^{1}$ be the preprojective modules
in $\mathcal{P}$ with $\bf{dim}$$P_{i}^{1}=\alpha_i+\delta$ for
$1\leq i\leq N$, where $\alpha_1, \ldots, \alpha_N$ are defined
right above Lemma \ref{existence of homomorphism}. Let $1\leq
j_1\leq N$ satisfy $\mu(P_{j}^{1})\leq \mu(P_{j_1}^{1})$ for all
$1\leq j\leq N$. Since
$$\mu(P_{j_1}^{1})\geq \mu(P_{j_0}^{1})>
\mu(P_{j_0}^{0})\geq \mu(P_i), 1\leq i\leq N,$$
 we have by Lemma \ref{dimension vector comparison} that $P_{j_1}^{1}$ is semistable.
Repeating the above process, we finally get a family of semistable modules
$\{P_{j_i}^{i} \}_{i\in \mathbb{N}} \in \mathcal{P}$ satisfying
$\mu(P_{j_i}^{i}) <\mu( P_{j_{i+1}}^{i+1} )< \mu(\delta)$. Hence,
$\{\mu(P_{j_i}^{i}) \}_{i\in \mathbb{N}}\subseteq {\scr X}_\theta$ and
${\scr X}_\theta$ is infinite.

\end{proof}

\begin{example}  Let $Q$ be the tame quiver of type
$\widetilde A_3$ with $A=kQ$:
$$\xymatrix@=0.5cm{  &&2\ar[dr]&\\Q:
&1\ar[ur]\ar[dr]&&4\\
&&3\ar[ur]& }
$$
It is known that $\delta=(1,1,1,1)$.

\medskip

(1) Take $\theta=(1, 1, 2, 0)$. An easy calculation shows that
$${\scr X}_\theta=\{0,1/2,2/3,1,2\}.$$
Moreover, $\mu(\delta)=1$ and $\mod^{1}A$ is equivalent to $\mod
k\Gamma$, where $\Gamma$ is a tame quiver of type $\widetilde A_2$.

\medskip

(2) Take $\theta=(1, 2, 3, 2)$. An easy calculation shows that
$${\scr X}_\theta=\{1,2,5/2,3\}.$$
Moreover, $\mu(\delta)=2$ and $\mod^{2}A$ is equivalent to $\mod k
\Gamma$, where $\Gamma$ is a Dynkin quiver of type $A_2$.

\medskip

(3) Take $\theta=(3, 2, 2, 1)$. Then
$$\{(8n+5)/(4n+3)\mid n\geq 0\} \subseteq {\scr X}_\theta.$$
Hence, ${\scr X}_\theta$ is infinite. Moreover, $\mu(\delta)=2$ and $\mod^{2}A$ consists of all the
regular $A$-modules.
\end{example}

\section*{Acknowledgements}
The author is very grateful to Professor Bangming
Deng for his encouragement and careful guidance. We also thank
Shiquan Ruan, Qinghua Chen and Jie Zhang for many helpful
discussions and concerns.

\end{document}